\numberwithin{equation}{section}
\newtheorem{theorem}{Theorem}[section]
\newtheorem{cor}[theorem]{Corollary}
\newtheorem{lemma}[theorem]{Lemma}
\newtheorem{prop}[theorem]{Proposition}
\newtheorem{defi}{Definition}[section]
\theoremstyle{definition}
\newtheorem{rem}{Remark}[section]
\renewcommand\j{{\bf j}}
\newcommand\ov{\overline}
\newcommand\be{\beta}
\newcommand\g{\mathfrak g}
\newcommand\h{\mathfrak h}
\newcommand\n{\mathfrak n}
\newcommand\bb{\mathfrak b}
\newcommand\D{\Delta}
\newcommand\Da{\widehat\Delta}
\newcommand\Dp{\Delta^+}
\newcommand\Pia{{\widehat\Pi}}
\newcommand\Dap{\widehat\Delta^+}
\newcommand\Wa{\widehat{W}}
\renewcommand\d{\delta}
\renewcommand\a{\alpha}
\renewcommand\b{\bb}
\renewcommand\i{{\mathfrak  i}}
\newcommand\nat{\mathbb N}
\newcommand\ganz{\mathbb Z}
\renewcommand\j{\mathfrak j}
\newcommand\s{\sigma}
\newcommand\e{\epsilon}
\newcommand\C{\mathbb C}
\newcommand\R{\mathbb R}
\newcommand{\Wab}{\mathcal W}
\newcommand{\Waab}{\mathcal W}
\renewcommand{\l}{\lambda}
\newcommand{\Ab}{\mathfrak{Ab}}
\begin{document}
\title{Symmetries of the poset of abelian ideals in a  Borel subalgebra}
\author[Cellini]{Paola Cellini}
\author[M\"oseneder Frajria]{Pierluigi M\"oseneder Frajria}
\author[Papi]{Paolo Papi}
\keywords{abelian ideals of Borel subalgebras, automorphisms, Hasse graphs}
\subjclass[2010]{Primary 	17B20; Secondary 17B22, 17B40}
\begin{abstract} Elaborating on Suter's paper \cite{S}, we provide a detailed description of  the automorphism group of the poset of abelian ideals in a Borel subalgebra of a finite dimensional complex simple Lie algebra.
\end{abstract}
\maketitle
\par\noindent
\section{Introduction}
This paper stems out from the attempt  to get a better understanding of  the final part of Suter's paper \cite{S}, where   the symmetries of the Hasse graph of the poset $\Ab$ of abelian ideals of a Borel subalgebra  in a finite dimensional complex simple Lie algebra $\g$ are analyzed.  After Kostant's seminal paper
\cite{KostIMRN}, abelian ideals of Borel subalgebras have been intensively studied. The theory of abelian ideals of Borel subalgebras
offers a wide variety of applications, ranging from representation theory of Kac-Moody algebras to combinatorics and number theory.  But its distinctive feature  is  to  provide a framework linking the theory of affine Weyl groups, the structure theory for the exterior algebra
of $\g$ as a $\g$-module, and many combinatorial aspects of the representation theory of $\g$. A glimpse on these connections is given in Section 3, where we also provide a concise description of the many ways known in literature to encode abelian ideals of  Borel subalgebras. \par
The  main result of  the present paper is a rigidity statement  about the poset structure of $\Ab$; we give a detailed proof  that its symmetries are exactly  the ones induced by automorphisms  of the Dynkin diagram, with just one exception in type $C_3$. See Theorem \ref{T}. 
Our goal was to find proofs which were as far as possible independent from the inspection of the global structure of the poset: the outcome of our efforts is that proofs only require either global inspections 
in rank at most $4$ or ``local'' inspections, which may be easily performed using Bourbaki's Tables.\par
The non trivial part in the proof of  Theorem \ref{T} consists in showing that an automorphism of the abstract poset $\Ab$ is indeed induced by an automorphism of the Dynkin diagram. The proof of this fact is discussed in Section 5. The main theorem is also used in Section 6 to discuss the symmetries of the Hasse graph of $\Ab$,
which is the original result by Suter.
In Section 4 we take the opportunity of discussing in detail some folklore results relating the automorphisms of the Dynkin diagram, the automorphisms of the extended Dynkin diagram, and the center of the connected simply connected Lie  group corresponding to $\g$. We also recover in our setting  the dihedral symmetry of a remarkable subposet  in the  Young lattice discovered by Suter in   \cite{Su}
and further discussed in \cite{S}; it is worthwhile to note that this symmetry recently got a renewed interest in literature: see \cite{1}, \cite{2}, \cite{Su3}.\par
{\it Acknowledgement.}
We would like to thank D. Panyushev for drawing our attention on the paper \cite{Su} and R. Suter for correspondence. We also would like to thank the anonymous referee for his/her careful reading of the paper.

\section{Setup}
Let 
 $\g$ be a finite-dimensional complex simple Lie algebra. Let  $\h\subset\g$ be a Cartan subalgebra. Denote by  $\D$ the corresponding irreducible root system, and by $W$ the  Weyl group of $\D$. 
 Fix a positive system $\Dp$ and let  $\Pi=\{\a_1,\ldots,\a_n\}$ be the corresponding basis of simple roots. Recall that there are at most two possible length for roots, which are correspondingly termed as long and short. We stipulate that
 roots are long if just one length occurs. 
 Let $(\cdot,\cdot)$ be {\it half} the Killing form of $\g$. For $\a\in\D\subset\h^*$, denote by $\a^\vee\in\h$ the corresponding coroot. Let  
 $\{\omega_1,\ldots,\omega_n\}\subset\h^*,\,\{\varpi_1,\ldots,\varpi_n\}\subset \h$ denote the dual bases of $\Pi^\vee, \Pi$, respectively, and set $\rho=\sum_{i=1}^n\omega_i$. 
 Let $P, Q$ be the  weight and root lattices, viewed as  posets via $\beta\leq\gamma\iff \gamma-\beta\in\ganz_+\Pi$. We also denote by $P^\vee, Q^\vee$ the coweight and coroot lattices.\par
  \par
 Let $F$ be the space of affine-linear functions on $V=\R\otimes_\ganz Q^\vee$. Endow $F$ with a symmetric bilinear form induced by $(\cdot,\cdot)$  on the linear part and extended by zero on the affine part.
 
  For $\a\in \D,j\in \ganz$, consider the following element of $F$:
 $$a_{\a,j}(v)=\a(v)+j.$$
It is shown in \cite{Mac} that the set  $\Da=\{a_{\a,j}\mid \a\in\D,j\in\ganz\}$ is an affine root system in $F$. For $\a\in \D,\ j\in \ganz$ let $s_{\a,j}$  be the affine reflection around the hyperplane $\a(x)=j$. Explicitly, $s_{\a,j}(v)=v-a_{\a,-j}(v)\a^\vee$. Let $\Wa$ be the subgroup of $Isom(V)$ generated by $\{s_{\a,j}\mid a_{\a,j}\in\Da\}$.
 Let $t_v$ be the translation by $v$.  It is well-known that $\Wa=W\ltimes Q^\vee$ (where $Q^\vee$ is viewed inside $\Wa$ via $\a^\vee\mapsto t_{\a^\vee}$) and that it is  a Coxeter group with generating set $s_0=s_{\theta,1}=t_{\theta^\vee}s_{\theta,0},s_i=s_{\a_i,0},\,i=1,\ldots,n$. Here $\theta=\sum_{i=1}^nm_i\a_i$ is the highest root of $\D$.
 
 A fundamental domain for the action of $\Wa$ on $V$ is given by 
 $$\{v\in V\mid \a(v)\geq 0\,\forall\,\a\in\Dp,\,\theta(v)\leq 1\}.$$
 Identifying $V$ and $V^*$ by means of $(\cdot,\cdot)$, we
can also define an action of $\Wa$ on $V^*$; then
 $$C_1=\{\lambda\in V^*\mid (\a,\lambda)\geq 0\,\forall\,\a\in\Dp,\,(\theta,\lambda)\leq 1\}$$
 is a fundamental domain for this action, called the {\it fundamental alcove}. We will refer to the alcoves as the $\Wa$-translates of $C_1$.  \par

The set 
 $$\Dap=\{a_{\a,j}\mid \a\in\D,j>0\}\cup\{a_{\a,0}\mid \a\in\Dp\}$$
 can be shown to be a set of positive roots in $\Da$ and the corresponding set of simple roots is 
 $\Pia=\{\a_0,\ldots,\a_n\}$, where $\a_0=a_{-\theta,1}$ and we identify $\a_i$ with $a_{\a_i,0},\,i=1,\ldots,n$. 
 
 Note that $\Wa$ acts on $F$ (as functions on $V$) and this action preserves $\Da$ and fixes $\d$, the constant function $1$.  Note that if we set $m_0=1$ we have 
 \begin{equation}\label{d}\d=\sum_{i=0}^nm_i\a_i.\end{equation}

  Let $\widehat  A=(a_{ij})_{i,j=0}^n,\,\,a_{ij}=\frac{2(\a_i,\a_j)}{(\a_j,\a_j)}$ be the Cartan matrix associated to $\Pia$, so that $ A=(a_{ij})_{i,j=1}^n$ is the   Cartan matrix of $\g$ (w.r.t. $\Pi$).  

\section{Abelian ideals of Borel subalgebras} Let $\b$ the Borel subalgebra corresponding to our choice of $\h$ and $\Dp$. 
Let us denote by $\Ab$ the poset of abelian ideals of $\b$. We now sum up all the  encodings of $\Ab$ we shall use in the following. For $w\in\Wa$, we set 
$N(w)=\{\a\in\Dap\mid w^{-1}(\a)\in -\Dap\}$. Recall that if $w=s_{i_1}\cdots s_{i_k}$ is a reduced expression of $w$, then 
$$N(w)=\{\a_{i_1},s_{i_1}(\a_{i_2}),\ldots,s_{i_1}\cdots s_{i_{k-1}}(\a_{i_k})\}.$$\par
Also  recall that, in any poset $\mathcal P$, a subset $I\subset \mathcal P$ is an 
order ideal (resp. dual order ideal) if $x\in I,\, y\leq x\implies y\in I$ (resp. $x\in I,\, y\geq x\implies y\in I$). Finally, an antichain $A\subset \mathcal P $ is a subset consisting of mutually non-comparable elements.
\par Spcialize now to the root poset $(\Dp,\leq)$. We say that a dual order ideal $I$ is {\it abelian} if $\a,\be\in I \implies \a+\be\notin \D$.
Let us set the following definitions.
\begin{defi}\
\begin{enumerate}
\item Set 
$$\Wab=\{w\in\Wa\mid  N(w)=\d-\Phi, \Phi\text{  abelian  dual order ideal in $\Dp$}\}.$$
These elements are called {\it minuscule}.
\item The $\rho$-points
 in $2C_1$ are the set of regular elements in $P\cap 2C_1$.
 \item  The {\it weight}  of $\i\in\Ab$ is $\langle\i\rangle =\sum_{\g_\a\subset \i}$.
\end{enumerate}
\end{defi}

\begin{prop}\label{CcPp} The following sets are in bijection with $\Ab$:
\begin{enumerate}
\item the set of abelian dual order ideals in $\Dp$;
\item the set $\Wab$ of minuscule elements in $\Wa$;
\item the set of alcoves in $2C_1$;
\item the set of $\rho$-points
 in $2C_1$;
\item the set of weights of abelian ideals.
\end{enumerate}
\end{prop}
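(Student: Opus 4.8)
The plan is to treat (1) as the basic dictionary between abelian ideals and root-combinatorial data, and to connect each of the other four descriptions to it in turn. Item (1) itself is just the translation of the definition of an abelian ideal: any ideal $\i\subseteq\b$ lies in the nilradical and has the form $\i=\bigoplus_{\a\in\Phi}\g_\a$ for some $\Phi\subseteq\Dp$, the requirement $[\b,\i]\subseteq\i$ says that $\a+\gamma\in\Phi$ whenever $\a\in\Phi$, $\gamma\in\Dp$ and $\a+\gamma\in\D$, i.e.\ that $\Phi$ is a dual order ideal of $(\Dp,\le)$, while abelianity $[\i,\i]=0$ says that $\a+\be\notin\D$ for all $\a,\be\in\Phi$. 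This is exactly (1).

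For (1)$\Leftrightarrow$(2) I would use the standard bijection between $\Wa$ and the finite biconvex subsets of $\Dap$ given by $w\mapsto N(w)$, where a finite $L\subseteq\Dap$ is biconvex when both $L$ and $\Dap\setminus L$ are closed under forming sums that still lie in $\Dap$. It then suffices to check that $\d-\Phi=\{\d-\a\mid\a\in\Phi\}$ is biconvex precisely when $\Phi$ is an abelian dual order ideal. Each $\d-\a$ sits at level one, so the sum of two of them sits at level two and can never lie in $\d-\Phi$; closedness of $\d-\Phi$ therefore forbids $\a+\be\in\D$ for $\a,\be\in\Phi$, i.e.\ forces abelianity. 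Dually, a level-one root $\d-\a$ admits in $\Dap$ only the decompositions $\d-\a=\gamma+(\d-(\a+\gamma))$ with $\gamma\in\Dp$ and $\a+\gamma\in\D$; since $\gamma$ is at level zero it is never in $\d-\Phi$, so coclosedness forces $\d-(\a+\gamma)\in\d-\Phi$, i.e.\ $\a+\gamma\in\Phi$, precisely the dual order ideal condition. Hence $\d-\Phi=N(w)$ for a unique $w$ exactly when $\Phi$ is an abelian dual order ideal, and $\Phi\mapsto w$ is the bijection onto $\Wab$.

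For (2)$\Leftrightarrow$(3) I would invoke the simple transitivity of $\Wa$ on alcoves, so that $w\mapsto wC_1$ identifies $\Wa$ with the set of alcoves and $N(w)$ becomes the set of positive affine roots separating $C_1$ from $wC_1$. On $2C_1$ one has $0\le\a(v)\le 2$ for all $\a\in\Dp$, using $\theta(v)\le 2$ and $\theta-\a\in\ganz_+\Pi$. A direct sign analysis of $a_{\be,j}(v)=\be(v)+j$ then shows that a positive affine root can be negative somewhere on an alcove inside $2C_1$ only if it has the form $\d-\a$ with $\a\in\Dp$; thus $wC_1\subseteq 2C_1$ implies $N(w)\subseteq\d-\Dp$, i.e.\ $w\in\Wab$. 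Conversely, if $N(w)=\d-\Phi$ then $wC_1$ lies on the positive side of each wall $\a_i=0$ and of the wall $\theta=2$ (the level-two root $2\d-\theta$ is not in $N(w)$), whence $wC_1\subseteq 2C_1$.

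The two remaining identifications are geometric, and I expect the precise bookkeeping here to be the main obstacle. For (3)$\Leftrightarrow$(4) the claim is that every alcove contained in $2C_1$ meets the weight lattice $P$ in exactly one regular interior point, its $\rho$-point, and that conversely each regular element of $P\cap 2C_1$ singles out the unique alcove containing it; making the ``exactly one'' precise, via affine coordinates on the dilated simplex, is the delicate part. For (4)$\Leftrightarrow$(5) I would use the sum-of-roots identity $\sum_{a\in N(w)}a=\widehat\rho-w\widehat\rho$ for the affine $\widehat\rho$, which for a minuscule $w$ gives $\widehat\rho-w\widehat\rho=|\Phi|\,\d-\wi$ with $\wi=\sum_{\g_\a\subseteq\i}\a$. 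Thus the weight $\wi$ recovers the finite part of the $\rho$-point, the map $\i\mapsto\wi$ is injective, and its image is exactly the set of weights appearing in (5).
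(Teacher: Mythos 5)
Your overall architecture (chaining (1)--(5) through the ideal $\Phi_\i$) matches the paper's, and several of your steps are actually \emph{more} complete than the printed proof: the paper disposes of (2) by invoking Peterson's observation that $\d-\Phi_\i$ is biconvex and of (3) by citing \cite{CP}, whereas you verify the biconvexity equivalence and the sign analysis on $2C_1$ directly, and both arguments are correct (the only loose end is that your coclosedness computation yields ``closed under adding a positive root staying in $\D$'', which you should identify with the dual-order-ideal condition via the standard fact that one can pass from $\a$ to any root $\be\ge\a$ by adding one simple root at a time). Your treatment of (5) is also a legitimate variant: instead of citing Kostant's injectivity theorem as the paper does, you derive $w_\i(\rho)=\rho+\wi$ from $\sum_{a\in N(w)}a=\widehat\rho-w(\widehat\rho)$, which is exactly the content of the paper's Proposition \ref{04} (proved there by induction on length immediately after this proposition), and then read off (5) from (4).

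The genuine gap is (3)$\Leftrightarrow$(4), which you explicitly leave as ``the delicate part'' without an argument. The needed input is not really delicate bookkeeping on the dilated simplex; it is the single lemma that $P\cap C_1=\{\rho\}$, which the paper imports from \cite[Lemma 2.2]{CMP} and which depends essentially on the normalization of the form (with $(\cdot,\cdot)$ half the Killing form one has $(\theta,\rho)=(h^\vee-1)/h^\vee<1$, so $\rho$ does lie in the open alcove; with the usual normalization $(\theta,\theta)=2$ the statement would be false). Granting that lemma, the count for an arbitrary alcove $C_\i=w_\i(C_1)$ is immediate by equivariance: $\Wa$ preserves $P$ (its translation part acts by elements of $\nu^{-1}(Q^\vee)\subseteq Q\subseteq P$) and preserves regularity, so the unique regular weight in $C_\i$ is $w_\i(\rho)$. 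As written, your proof of (4) is a restatement of what must be shown rather than a proof, so you should either prove $P\cap C_1=\{\rho\}$ (a short computation with the marks $m_i$ and the normalization above) or cite it.
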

\begin{proof}
If $\i\in\Ab$, then 
$\i=\bigoplus\limits_{\a\in\Phi_\i}\g_\a$, where $\Phi_\i=\{\a\in\Dp\mid\g_\a\subset\i\}$. The fact that $\i$ is an abelian ideal of $\b$ clearly translates into the fact that $\Phi_\i$ is a dual order ideal in $\Dp$ which 
is also abelian.
\par
The set $\Ab$ is related to $\Wa$ by the following idea of Dale Peterson:  if $\i\in\Ab$, the set $\d-\Phi_\i\subset\Dap$ is biconvex, hence there exists 
a unique element $w_\i\in\Wa$ such that $N(w_\i)=\d-\Phi_\i$. 

 An explicit description of the set
$\Waab$ of minuscule elements has been found in
\cite{CP}, where it has been shown that  the alcoves 
\begin{equation}\label{alc}C_\i:=w_\i(C_1)
\end{equation}
 cover $2C_1$.

Recall that we are taking as invariant form on $\h$ half the Killing form so $(\cdot,\cdot)$ is twice the Killing form on $\h^*$.  Lemma 2.2 of \cite{CMP} Ênow shows that $P\cap C_1=\{\rho\}$. Hence in any alcove $C_\i$ there is just one regular element of $P$, which is indeed $w_\i(\rho)$ (hence our terminology).

The fact that the map $\i\mapsto \langle\i\rangle$ is injective has been shown in \cite[Theorem 7]{Kostant}.
\end{proof}
\begin{rem} We single out  two more encodings of $\Ab$ available in  literature:
\begin{enumerate}
\item[(6)] the set $\{\eta\in Q^\vee\mid \eta(\a)\in\{-2-1,0,1\}\ \forall\,\a\in\Dp\}$;
\item[(7)] the set of antichains $A\in\Dp$ such that for any $\a,\beta\in A$ we have $\a+\beta\not\leq\theta$.
\end{enumerate}
To get the first encoding, recall the semidirect product decomposition $\Wa=Q^\vee\rtimes W$ and write $w_\i=t_{\tau_\i}v_\i$ accordingly.
Then the map $\Waab\to \{\eta\in Q^\vee\mid \eta(\a)\in\{-2-1,0,1\}\,\forall\,\a\in\Dp\},\,w_\i\mapsto v_\i^{-1}(\tau_\i)$ is a bijection (see \cite{CP}, \cite{KostIMRN}).\par
For the final statement, recall that any (dual)  order ideal in a finite poset is determined by an antichain. It follows from  \cite[Theorem 1]{CDR} that the antichains giving rise to  abelian ideals are characterized by the property stated in $(7)$. 
\end{rem}

\begin{rem} The term {\it weight} Êused in item $(5)$ has indeed a representation-theoretical meaning: one of the main results of 
\cite{Kostant} is the analysis of the structure of $\bigwedge \g$ as a $\g$-module.  Any commutative subalgebra $\mathfrak a =\oplus_{i=1}^k \C v_i\subseteq\g$ gives rise to a decomposable vector $v_{\mathfrak a}=v_1\wedge\cdots\wedge v_k\in \bigwedge^k  \g$. Let $\mathcal A$ be  the span of all the  vectors $v_{\mathfrak a}$.
A key step  in understanding the structure of $\bigwedge \g$ consists in determining the $\g$-module structure of $\mathcal A$. It turns out that 
$\mathcal A$ is multiplicity free and its highest weight vectors are precisely the $v_{\mathfrak i}$, when $\i$ ranges over $\Ab$. Note that the weight of
  $v_{\mathfrak i}$ is $\langle \i\rangle$. 
  \end{rem}
  
  The next proposition, which is known (see \cite{Koinv} or \cite{S}), shows once more that the map $\i\mapsto \langle\i\rangle$ is an encoding of $\Ab$. For the reader's convenience we reprove it in our setting.

\begin{prop}\label{04} $w_\i(\rho)=\rho+\langle \i\rangle$.
\end{prop}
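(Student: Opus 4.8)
The plan is to reduce everything to one general identity for the action of $\Wa$ on $V^*$ and then specialize. For an affine root $\beta=a_{\a,j}$ write $\ov\beta=\a$ for its \emph{gradient} (the linear part, i.e.\ $\beta$ read modulo $\d$). The identity I aim to prove is
\begin{equation*}
w(\rho)=\rho-\sum_{\beta\in N(w)}\ov\beta\qquad\text{for every }w\in\Wa.\tag{$\star$}
\end{equation*}
Granting $(\star)$, the Proposition is immediate: since $N(w_\i)=\d-\Phi_\i=\{a_{-\a,1}\mid\a\in\Phi_\i\}$, we have $\ov{\d-\a}=-\a$, and hence
\[
w_\i(\rho)=\rho-\sum_{\a\in\Phi_\i}(-\a)=\rho+\sum_{\a\in\Phi_\i}\a=\rho+\wi,
\]
the last equality being the definition of the weight $\wi$.

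I would prove $(\star)$ by induction on $\ell(w)$, reading a reduced expression from the right. The case $w=e$ is trivial. The length-one case asks for $s_i(\rho)=\rho-\ov{\a_i}$ for every $i=0,\dots,n$. For $i\ge1$ this is the classical $s_i(\rho)=\rho-\a_i$. For $i=0$ it reads $s_0(\rho)=\rho+\theta$ (as $\ov{\a_0}=\ov{a_{-\theta,1}}=-\theta$): writing $s_0$ as the affine reflection of $V^*$ in the wall $\{(\theta,\lambda)=1\}$ of $C_1$, namely $s_0(\lambda)=\lambda-\big((\theta,\lambda)-1\big)\tfrac{2\theta}{(\theta,\theta)}$, and using $(\rho,\theta^\vee)=h^\vee-1$ together with the normalization of the form (for which $(\theta,\theta)=2/h^\vee$, equivalently $P\cap C_1=\{\rho\}$), a direct computation gives exactly $s_0(\rho)=\rho+\theta$.

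For the inductive step I write $w=us_i$ with $\ell(w)=\ell(u)+1$. The reduced-word description of $N$ recalled in Section~3 gives the disjoint union $N(w)=N(u)\sqcup\{u(\a_i)\}$. Writing the affine action of $u$ on $V^*$ as $u(\lambda)=v(\lambda)+\tau_u$ with linear part $v\in W$, and using the length-one case, I obtain
\[
w(\rho)=u\big(s_i(\rho)\big)=u(\rho-\ov{\a_i})=u(\rho)-v(\ov{\a_i}).
\]
Since $\Wa$ fixes $\d$, it acts on gradients through its linear part, so $v(\ov{\a_i})=\ov{u(\a_i)}$; combining this with the inductive hypothesis $u(\rho)=\rho-\sum_{\beta\in N(u)}\ov\beta$ yields $w(\rho)=\rho-\sum_{\beta\in N(w)}\ov\beta$, completing the induction.

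The one genuinely delicate point is the affine generator identity $s_0(\rho)=\rho+\theta$: this is the only place where the metric normalization of $(\cdot,\cdot)$ intervenes, and one must verify that the relevant scalar comes out to be exactly $1$. Everything else is formal bookkeeping — the additivity of $N$ along a reduced word, the fact that gradients transform through the finite Weyl group, and the harmless role of the translation part $\tau_u$, which drops out because $\rho$ and $\rho-\ov{\a_i}$ are translated by the same vector.
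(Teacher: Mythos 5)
Your proof is correct and is essentially the paper's own argument: both establish the identity $\rho-w(\rho)=\sum_{\beta\in N(w)}\ov\beta$ (the paper's \eqref{wrhominusrho}) by induction on $\ell(w)$, using the additivity of $N$ along a reduced word, the generator formulas $s_i(\rho)=\rho-\a_i$ and $s_0(\rho)=\rho+\theta$, and the fact that gradients transform through the linear part because $\Wa$ fixes $\d$; the specialization to $N(w_\i)=\d-\Phi_\i$ is then identical. The only differences are notational (you telescope from the right via $w=us_i$ where the paper writes $w=vs_i$ and uses $w(\l)-w(\mu)=\ov{w(\l-\mu)}$), so there is nothing further to add.
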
\begin{proof} Recall that the action of $\Wa$ on $V^*$ is obtained by identifying $V$ and $V^*$ using the invariant form $(\cdot,\cdot)$.  Explicitly we have that
$$
s_{0}(\l)=\l-(\l(\theta^\vee)-h^\vee)\theta,\ s_{i}(\l)=\l-\l(\a_i^\vee)\a_i, \ (i>0),
$$
where $h^\vee=\frac{2}{(\theta,\theta)}$ is the dual Coxeter number of $\g$.
In particular, we have that
\begin{equation}\label{srho}
s_{0}(\rho)=\rho+\theta,\ s_{i}(\rho)=\rho-\a_i, \ (i>0).
\end{equation}

We  identify $V^*$ and $F/\C\d$; let $\l\mapsto \bar \l$  be the projection map from $F$ to $V^*$. Since $\d$ is fixed by $\Wa$, then  $\l\mapsto \ov{w\l}$ defines a (linear) action of $\Wa$ on $V^*$. Note that
$$w(\l)-w(\mu)=\ov{w(\l-\mu)}.
$$
We now prove by induction on $\ell(w)$ that 
\begin{equation}\label{wrhominusrho}
\rho-w(\rho)=\ov{\langle N(w)\rangle}.
\end{equation}
Indeed, if $\ell(w)=0$ there is nothing to prove, while, if $w=vs_i$ with $\ell(w)=\ell(v)+1$, then 
$$\rho-w(\rho)=\rho-v(\rho)+\ov{v(\rho-s_i(\rho))}=\ov{\langle N(v)\rangle}+\ov{v(\a_i)}=\ov{\langle N(w)\rangle}.
$$
Now observe that, by \eqref{wrhominusrho}, $w_\i(\rho)-\rho=
-\ov{\langle N(w_\i)\rangle}=\langle \i\rangle$.
\end{proof} 

 Recall that $W$ can be endowed with the following partial order, called the {\it left weak Bruhat order}: for $w,w'\in W$ we define $w\leq w'$ if $w'=ws_{i_1}\cdots s_{i_k},\,\ell(ws_{i_1}\cdots s_{i_j})=\ell(w)+j,\,j=1,\ldots,k$.
\begin{rem}\label{posetiso}
Note that $\Ab$ is a poset under inclusion, $\Wab$ is a poset under left weak Bruhat order, and the weights  of ideals and the $\rho$-points have a  poset structure induced by that of $P$.
The maps $\i\mapsto w_\i, \i\mapsto \langle\i\rangle,\,\i\mapsto w_\i(\rho)$ preserve the order. 
Indeed, $u\leq v$ in the left weak Bruhat order if and only if $N(u)\subseteq N(w)$. Moreover if $\i\subseteq \j$,  then $\Phi_\i\subseteq \Phi_\j$ and
$\langle \i\rangle=\sum\limits_{\a\in\Phi_\i}\a\leq \sum\limits_{\a\in\Phi_\j}\a=\langle \j\rangle$. Finally  Proposition \ref{04} guarantees that $\i\mapsto w_\i(\rho)$ is order compatible.
\end{rem}


\section{$Aut(\Pi),\,Aut(\Pia)$ and dihedral symmetries in the Young lattice}
Set
\begin{align*}
Aut(\Pi)&=\{\s:\Pi\leftrightarrow\Pi\mid a_{ij}=a_{\s(i)\s(j)}\},\\
Aut(\Pia)&=\{\s:\Pia\leftrightarrow\Pia\mid a_{ij}=a_{\s(i)\s(j)}\}.
\end{align*}
We are identifying  the action on indices with the action on simple roots.\par
Recall that we set $\theta=\sum_{i=1}^nm_i\a_i$. Define $m_i^\vee$ by setting  $\theta^\vee=\sum_{i=1}^nm^\vee_i\a^\vee_i$.
Also set $m_0=m_0^\vee=1$.
\par\noindent
\begin{lemma}\label{ai} \
\begin{enumerate}
\item If $\s\in Aut(\Pi)$, then $m_i=m_{\s(i)}$ and $m^\vee_i=m^\vee_{\s(i)}$ for all $i=1,\ldots,n$.
\item  If $\s\in Aut(\Pia)$, then $m_i=m_{\s(i)}$ and $m^\vee_i=m^\vee_{\s(i)}$  for all $i=0,\ldots,n$.
\end{enumerate}
\end{lemma}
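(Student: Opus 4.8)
The two parts have genuinely different character, since the finite Cartan matrix $A$ is invertible while the affine matrix $\widehat{A}$ is singular, so I would treat them separately. For part (1), I would first note that $\sigma\in Aut(\Pi)$ extends linearly to $\h^*$ by $\a_i\mapsto\a_{\sigma(i)}$, and by the standard functoriality of root systems (concretely, $\sigma s_i\sigma^{-1}=s_{\sigma(i)}$ gives an automorphism of $W$, so $\D=W\Pi$ and $\Dp$ are preserved) this map permutes $\Dp$ and respects the dominance order. Hence it fixes the unique maximal root $\theta$. Writing $\theta=\sum_{i=1}^n m_i\a_i$ and equating $\sigma(\theta)=\sum_i m_i\a_{\sigma(i)}$ with $\theta$ in the basis $\Pi$ yields $m_i=m_{\sigma(i)}$. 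For the comarks I would insert the observation that $\sigma$ preserves root lengths: from $a_{ij}/a_{ji}=(\a_i,\a_i)/(\a_j,\a_j)$ (valid whenever $\a_i,\a_j$ are non-orthogonal) and $a_{ij}=a_{\sigma(i)\sigma(j)}$, the ratio $(\a_{\sigma(i)},\a_{\sigma(i)})/(\a_i,\a_i)$ is constant along edges of the connected Dynkin diagram, hence globally constant, and since $\sigma$ merely permutes the $\a_i$ this constant is $1$. Combining $m_i=m_{\sigma(i)}$ and $(\a_i,\a_i)=(\a_{\sigma(i)},\a_{\sigma(i)})$ with $m_i^\vee=m_i\,(\a_i,\a_i)/(\theta,\theta)$ gives $m_i^\vee=m_{\sigma(i)}^\vee$.

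For part (2) the plan is to characterize marks and comarks intrinsically through $\widehat{A}$, not through $\delta$. Using the paper's convention $a_{ij}=2(\a_i,\a_j)/(\a_j,\a_j)$ and that $\delta=\sum_{i=0}^n m_i\a_i$ lies in the radical of the form, so $(\delta,\a_j)=0$ for every $j$, a one-line computation gives $\sum_i m_i a_{ij}=0$; that is, $(m_0,\dots,m_n)$ is a left null vector of $\widehat{A}$. Dually, the central element $\sum_j m_j^\vee\a_j^\vee$ pairs to zero with every $\a_i$, so $\sum_j a_{ij}m_j^\vee=0$ and $(m_0^\vee,\dots,m_n^\vee)$ is a right null vector. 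Because $\widehat{A}$ has corank one, each of these null spaces is one-dimensional.

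I would then exploit that the defining relation $a_{\sigma(i)\sigma(j)}=a_{ij}$ says exactly that the permutation matrix of $\sigma$ commutes with $\widehat{A}$, so $\sigma$ preserves both the left and the right kernel. By one-dimensionality, the permuted vector $(m_{\sigma^{-1}(0)},\dots,m_{\sigma^{-1}(n)})$ equals $\lambda\,(m_0,\dots,m_n)$ for some scalar $\lambda$; summing over all coordinates and using $\sum_i m_{\sigma^{-1}(i)}=\sum_i m_i\neq 0$ forces $\lambda=1$, whence $m_{\sigma(i)}=m_i$ for $i=0,\dots,n$. The identical argument on the right kernel gives $m_{\sigma(i)}^\vee=m_i^\vee$. (Since $Aut(\Pi)$ sits inside $Aut(\Pia)$ as the stabilizer of the node $0$, part (1) could also be deduced from part (2); I keep the direct treatment above because it isolates the length-preservation step transparently.)

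The one genuinely delicate point is part (2). The tempting shortcut ``$\sigma$ fixes $\theta$, hence fixes $\a_0=\delta-\theta$, hence fixes $\delta=\sum m_i\a_i$'' is circular: an affine diagram automorphism need not fix the node $0$ (for instance the rotations in type $A_n^{(1)}$), and fixing $\delta$ is precisely the conclusion we are after. Rerouting through the null spaces of $\widehat{A}$, which $\sigma$ manifestly preserves, is what makes the argument uniform and coordinate-free; the only remaining care is bookkeeping the paper's normalization of $a_{ij}$ correctly, in order to assign the marks to the left kernel and the comarks to the right kernel.
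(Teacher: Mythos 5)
Your proof is correct and follows essentially the same route as the paper: part (1) via $\sigma(\theta)=\theta$ together with preservation of root lengths, and part (2) via the corank-one property of $\widehat A$, whose (one-dimensional) kernels are preserved because the permutation matrix of $\sigma$ commutes with $\widehat A$. The only divergence is a sub-step of (1) — the paper gets length preservation by extending $\sigma$ to an automorphism of $\g$ and invoking invariance of the Killing form, while you derive it combinatorially from $a_{ij}/a_{ji}=(\a_i,\a_i)/(\a_j,\a_j)$ — and your care in matching marks/comarks to the left/right kernels under the paper's convention $a_{ij}=2(\a_i,\a_j)/(\a_j,\a_j)$ is warranted and correctly resolved.
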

\begin{proof} (1). Extend $\s$ to an automorphism of $\g$. It preserves the Killing form, hence any bilinear invariant form, since $\g$ is simple. Moreover, it induces an order preserving map on roots, hence fixes $\theta$. In turn $m_i=m_{\s(i)}$ for $i=1,\ldots,n$. Since $m^\vee_i=\frac{(\a_i,\a_i)}{(\theta,\theta)}m_i$, we have  $m^\vee_i=m^\vee_{\s(i)}$ as well.\par
(2). Extend $\s$ by linearity to $F$. Identify $\hat A$ with the operator on $F$ whose matrix  in the basis $\Pia$ is  $\hat A$. Recall that $Ker\,\hat A$ is 1-dimensional generated by $\d$. Then we have 
$$0=\s(\hat A\d)=(\s\circ \hat A \circ \s^{-1})(\s(\d))= \hat A\s(\d).$$
and in turn that  $\s(\d)=k\d$. Comparing coefficients, we have that $k=1$ and in turn that $m_i=m_{\s(i)}$. For the last statement recall from \cite{Kac} that 
$(m_0^\vee,\ldots,m_n^\vee)$ generates linearly the kernel of $\widehat A^t$, so that we can argue as above.
\end{proof}

Let $Isom(V^*)$  denote the set of isometries of $V^*$ and
$$
I(C_1)=\{\phi\in Isom(V^*)\mid \phi(C_1)=C_1\}.
$$

\begin{prop}\label{prima}$I(C_1)\cong Aut(\Pia)$.
\end{prop}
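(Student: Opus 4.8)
The plan is to construct a pair of mutually inverse group homomorphisms between $I(C_1)$ and $Aut(\Pia)$, using that $C_1$ is a simplex whose $n+1$ walls are indexed by $\Pia$. Regarding each affine simple root as an affine function on $V^*$ via the identification $V\cong V^*$, namely $\a_i(\lambda)=(\overline{\a}_i,\lambda)+k_i$ with linear part $\overline{\a}_i$ (so $\overline{\a}_i=\a_i$ and $k_i=0$ for $i\ge 1$, while $\overline{\a}_0=-\theta$ and $k_0=1$), the fundamental alcove is exactly $C_1=\{\lambda\mid \a_i(\lambda)\ge 0,\ i=0,\dots,n\}$, and its wall $H_i=C_1\cap\ker\a_i$ has inward unit normal $\overline{\a}_i/|\overline{\a}_i|$. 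Any $\phi\in I(C_1)$ permutes the walls, hence determines a permutation $\sigma=\Psi(\phi)$ of $\{0,\dots,n\}$ with $\phi(H_i)=H_{\sigma(i)}$. An isometry fixing all walls fixes all vertices of the full-dimensional simplex, hence is the identity, so $\Psi$ is an injective homomorphism; the substance of the proposition is that its image is precisely $Aut(\Pia)$.

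First I would prove $\sigma\in Aut(\Pia)$, i.e. $a_{ij}=a_{\sigma(i)\sigma(j)}$. The linear part $\overline{\phi}$ sends the inward normal of $H_i$ to that of $H_{\sigma(i)}$, so it preserves the dihedral angles and thus the symmetric products $a_{ij}a_{ji}$. The delicate point---indeed the main obstacle of the whole argument---is to also recover the orientation of the bonds, equivalently the relative root lengths, which a bare Euclidean symmetry of the angles does not see. This is forced by the affine structure: reflecting in the walls gives $\phi s_i\phi^{-1}=s_{\sigma(i)}$, so $\phi$ normalizes $\Wa$ and therefore preserves the entire arrangement of reflection hyperplanes $\ker a_{\a,j}$. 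The hyperplanes parallel to $H_i$ form the family $\{(\overline{\a}_i,\lambda)=p\mid p\in\ganz\}$, whose consecutive spacing equals $1/|\overline{\a}_i|$; since $\phi$ carries it isometrically onto the family parallel to $H_{\sigma(i)}$, we obtain $|\overline{\a}_i|=|\overline{\a}_{\sigma(i)}|$. Equal lengths together with equal angles upgrade the relation on normals to $\overline{\phi}(\overline{\a}_i)=\overline{\a}_{\sigma(i)}$ for every $i$, whence $(\overline{\a}_i,\overline{\a}_j)=(\overline{\a}_{\sigma(i)},\overline{\a}_{\sigma(j)})$ and finally $a_{ij}=a_{\sigma(i)\sigma(j)}$, so $\sigma\in Aut(\Pia)$.

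For surjectivity I would reverse the construction. Given $\sigma\in Aut(\Pia)$, define $\overline{\phi}$ to be the linear map with $\overline{\phi}(\overline{\a}_i)=\overline{\a}_{\sigma(i)}$ for $i=1,\dots,n$ (a basis of $V^*$). By Lemma \ref{ai}(2) the marks satisfy $m_i=m_{\sigma(i)}$, so pairing this with the relation $\eqref{d}$, i.e. $\sum_{i=0}^n m_i\overline{\a}_i=0$, shows that $\overline{\phi}(\overline{\a}_0)=\overline{\a}_{\sigma(0)}$ as well; moreover a Cartan-matrix automorphism preserves each $(\overline{\a}_i,\overline{\a}_j)$ (the length ratios $(\overline{\a}_i,\overline{\a}_i)/(\overline{\a}_j,\overline{\a}_j)=a_{ij}/a_{ji}$ are $\sigma$-invariant, and the global scale is $1$ because $\sigma$ permutes the finite set of roots), so $\overline{\phi}$ is a linear isometry. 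It remains to fix the correct translation: I seek $\phi=\overline{\phi}+t$ with $\phi(H_i)=H_{\sigma(i)}$, which amounts to the linear system $(\overline{\a}_{\sigma(i)},t)=k_i-k_{\sigma(i)}$. This is solvable and consistent: the $n$ equations indexed by $\sigma(i)\ge 1$ determine $t$ uniquely, and the remaining equation is then automatic, since weighting the equations by $m_{\sigma(i)}$ and summing annihilates the left-hand side through $\sum m_i\overline{\a}_i=0$ while the right-hand side cancels by $m_i=m_{\sigma(i)}$. Since $\overline{\phi}$ maps inward normals to inward normals, this $\phi$ preserves $C_1$, so $\phi\in I(C_1)$ with $\Psi(\phi)=\sigma$.

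Finally I would note that $\Psi$ and $\sigma\mapsto\phi$ are inverse to each other and multiplicative---routine, once both are matched against the labelling of the walls by $\Pia$---yielding $I(C_1)\cong Aut(\Pia)$. To summarise, all but one step is bookkeeping; the single genuinely delicate point is the forward implication that a metric symmetry of the alcove respects the orientation of the Dynkin bonds, and this is exactly where the affine (rather than merely linear) geometry intervenes, through the equal spacing of parallel reflection hyperplanes or, equivalently, through the relation $\delta=\sum_{i=0}^n m_i\a_i$ and the invariance of the marks.
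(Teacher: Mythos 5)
Your argument is correct, and it shares the overall architecture of the paper's proof --- read off a permutation of $\Pia$ from the action of $\phi\in I(C_1)$ on the combinatorial data of the simplex $C_1$, show that it preserves the Cartan matrix, and invert the construction --- but the two technical cruxes are settled by genuinely different means. For the forward direction the paper works with the vertices $o_0=0$, $o_i=\varpi_i/m_i$ rather than with the walls: it derives $\phi(\a_i)=\frac{m_{f_\phi(i)}}{m_i}\a_{f_\phi(i)}$ by evaluating at the vertices, and then kills the scalar by noting that its square is a ratio of squared root lengths, hence lies in $\{1,2,3,1/2,1/3\}$, while also being the square of a rational number; only $1$ survives. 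You instead pin the scalar down through the affine geometry: $\phi$ normalizes $\Wa$, hence permutes the reflection hyperplanes, and the spacing $1/\lVert\overline{\a}_i\rVert$ of the family parallel to the $i$-th wall must be preserved. Both are sound; your route makes the role of the affine arrangement explicit (and is the conceptual ``reason'' the bond orientations are seen), while the paper's rationality trick is a little shorter and needs nothing beyond the list of admissible length ratios. For surjectivity the paper checks that the affine map determined by $f$ is an isometry by invoking Kac's normalized form $\langle\a_i,\a_j\rangle=a_{ij}m_j(m_j^\vee)^{-1}$ together with Lemma \ref{ai} and uniqueness of invariant forms up to scalar, and then verifies $z(o_i)=o_{f(i)}$; you give a self-contained scaling argument (the ratios $a_{ij}/a_{ji}$ plus the fact that $\sigma$ permutes a finite set force the global scale to be $1$) and solve explicitly for the translation part, checking consistency of the overdetermined linear system via $\sum_i m_i\overline{\a}_i=0$ and $m_i=m_{\sigma(i)}$. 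I see no gaps in either half.
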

\begin{proof} Let $\nu:V^*\to V$ be the identification via the invariant form $(\cdot,\cdot)$.
Recall that $\nu(C_1)$ is the simplex with vertices $o_i,i=0,\ldots,n$, where $o_0=0,\,o_i=\varpi_i/m_i$. Given 
$\phi\in I(C_1)$, then $z=\nu\circ \phi\circ\nu^{-1}$ permutes the $o_i$'s, hence induces a permutation of $\Pia$, denoted by $f_\phi$. We claim 
that $f_\phi\in Aut(\Pia)$. 
First we prove that
\begin{equation}\label{prel}\phi(\a_i)=\frac{m_{f_\phi(i)}}{m_i}\a_{f_\phi(i)}.\end{equation}
Indeed $\a_j(o_r)=\d_{jr}\frac{1}{m_j}$; on the other hand $\phi(\a_i)(o_r)=\a_i(z^{-1}o_r)=\d_{rf_\phi(i)}\frac{1}{m_i}=\frac{m_{
f_\phi(i)}}{m_i}\d_{rf_\phi(i)}\frac{1}{m_{f_\phi(i)}}$.  

Since $\phi$ is an isometry, we have, by \eqref{prel},
$$||\a_i||^2=||\phi(\a_i)||^2=\left(\frac{m_{f_\phi(i)}}{m_i}\right)^2||\a_{f_\phi(i)}||^2.$$
But the ratio $||\a_i||^2/||\a_{f_\phi(i)}||^2$ can be just $1,2$ or $3$ (or $1/2,1/3$). Since $\frac{m_{f_\phi(i)}}{m_i}\in\mathbb Q$, the only possibility
is $1$, so that
 $m_{f_\phi(i)}=m_i$. Hence \eqref{prel} simplifies to
\begin{equation}\label{rel}\phi(\a_i)=\a_{f_\phi(i)}, \end{equation}
and in turn, since $\phi$ is an isometry, we have 
$$a_{f_\phi(i)f_\phi(j)}=\frac{2(\a_{f_\phi(i)},\a_{f_\phi(j)})}{(\a_{f_\phi(j)},\a_{f_\phi(j)})}=\frac{2(\phi\a_i,\phi\a_j)}{(\phi\a_j,\phi\a_j)}=a_{ij}.$$
We have established a  map $I(C_1)\to Aut(\Pia),\,\phi\mapsto f_\phi,$ which is clearly a group monomorphism. To prove its surjectivity, consider $f\in\ Aut(\Pia)$ and let $\phi$ denote the unique 
affine map on $V^*$ such that $\phi(\a_i)=\a_{f(i)}$. We first check that $\phi$ is an isometry. By \cite[(6.2.2)]{Kac} there exists an invariant form $\langle\cdot,\cdot\rangle$ for  which $\langle\a_i,\a_j\rangle=a_{ij}m_j(m^\vee_j)^{-1}$ so, 
 since $f\in Aut(\Pia)$, Lemma \ref{ai} and the fact that all nondegenerate invariant  bilinear symmetric forms on a simple Lie algebra are proportional show that $(\phi(\a_i),\phi(\a_j))=(\a_i,\a_j)$. 
 
 Set $z=\nu\circ \phi\circ\nu^{-1}$. Then $\a_j(z(o_i))= \phi^{-1}(\a_j)(o_i)=\d_{j, f(i)}\frac{1}{m_i}=\d_{j, f(i)}\frac{1}{m_{f(i)}}=\a_j(o_{f(i)})$. It follows that $z(o_i)=o_{f(i)}$, hence $f_\phi=f$.
  \end{proof}
 
Let $w_0$ be the longest element of $W$ and $w_0^i$ the longest element of the parabolic subgroup generated by 
$s_{\a_j},j\ne i$. Set $J=\{i\mid m_i=1\}$ and let $\Wa^e=P^\vee\rtimes W$ be the extended affine Weyl group. We let $\Wa^e$ act on $V^*$ via the identification $\nu:V^* \to V$. Set 
$$Z=\{Id_V,t_{\varpi_i}w_0^iw_0\mid i\in J\}.$$
It can be shown (cf. \cite{IM}) that $Z$ is isomorphic to the center of the connected simply connected Lie group with Lie algebra $\g$.
\begin{prop}\label{IMM}\cite[Prop. 1.21]{IM}\label{im} $Z=\{\phi\in \Wa^e\mid \phi(C_1)=C_1\}$. 
\end{prop}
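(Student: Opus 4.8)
The plan is to prove the set equality $Z=\{\phi\in\Wa^e\mid \phi(C_1)=C_1\}$ by showing both inclusions, using the explicit description of $Z$ together with the fact that $C_1$ is a fundamental domain for the (non-extended) affine Weyl group $\Wa$ acting on $V^*$. The key structural fact I would exploit is that $\Wa^e$ contains $\Wa$ as a normal subgroup, with $\Wa^e/\Wa\cong P^\vee/Q^\vee$, and that this quotient is isomorphic to the stabilizer of $C_1$ in $\Wa^e$. Since $\Wa$ acts simply transitively on alcoves and $C_1$ is a fundamental alcove, no nontrivial element of $\Wa$ can fix $C_1$; hence the stabilizer of $C_1$ in $\Wa^e$ maps isomorphically to $\Wa^e/\Wa$, giving $|\{\phi\in\Wa^e\mid\phi(C_1)=C_1\}|=|P^\vee/Q^\vee|=|J|+1$, matching the cardinality of $Z$.

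For the inclusion $Z\subseteq\{\phi\mid\phi(C_1)=C_1\}$, the plan is to verify directly that each generator $t_{\varpi_i}w_0^iw_0$ with $i\in J$ stabilizes $C_1$. First I would check that $w_0^iw_0$ sends the fundamental alcove to a translate: concretely, $w_0^iw_0$ is the Weyl group element that permutes the simple roots according to the diagram automorphism $-w_0$ composed with the relevant parabolic longest element, and its effect on the vertices $o_j=\varpi_j/m_j$ of $\nu(C_1)$ can be computed. The translation $t_{\varpi_i}$ with $i\in J$ (so $m_i=1$, meaning $\varpi_i$ is a minuscule coweight) then shifts this back so that the composite permutes the vertices $o_0,\ldots,o_n$ of the simplex $\nu(C_1)$ among themselves. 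Because an affine-linear map that permutes the vertices of a simplex maps the simplex to itself, this yields $\phi(C_1)=C_1$. The identity obviously stabilizes $C_1$.

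For the reverse inclusion, given $\phi\in\Wa^e$ with $\phi(C_1)=C_1$, I would argue via the cardinality count sketched above: the map $\{\phi\in\Wa^e\mid\phi(C_1)=C_1\}\to\Wa^e/\Wa$ is injective (two stabilizing elements in the same $\Wa$-coset differ by an element of $\Wa$ fixing $C_1$, which must be the identity by simple transitivity), and $Z$ already supplies $|P^\vee/Q^\vee|$ distinct elements of the stabilizer in distinct cosets. Since $|\Wa^e/\Wa|=|P^\vee/Q^\vee|=|J|+1$, the stabilizer has exactly $|J|+1$ elements, forcing the equality $Z=\{\phi\in\Wa^e\mid\phi(C_1)=C_1\}$.

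\emph{The main obstacle} I anticipate is the explicit verification that each $t_{\varpi_i}w_0^iw_0$ genuinely fixes $C_1$, i.e.\ that it permutes the vertices $o_j$ correctly; this requires knowing how $w_0^iw_0$ acts on the fundamental coweights and identifying the induced permutation of $\{0,1,\ldots,n\}$ with an element of $Aut(\Pia)$ fixing the affine node appropriately. This is the computational heart of the argument and is where one must invoke the standard theory of minuscule coweights and the action of $\Wa^e$ on alcoves (as in \cite{IM}). The cardinality count and the simple-transitivity argument are then routine once this verification is in hand.
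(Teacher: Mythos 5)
The paper gives no proof of this proposition at all: it is quoted verbatim from \cite[Prop.~1.21]{IM}, so there is no in-paper argument to compare yours against. Your sketch is the standard proof of that result and is correct in outline. The group-theoretic skeleton is sound: $\Wa^e$ permutes the alcoves (it normalizes $\Wa$ and preserves the arrangement of affine hyperplanes), $\Wa$ acts simply transitively on them, so the stabilizer of $C_1$ in $\Wa^e$ meets $\Wa$ trivially and maps bijectively onto $\Wa^e/\Wa\cong P^\vee/Q^\vee$; once one knows that each $t_{\varpi_i}w_0^iw_0$ with $i\in J$ stabilizes $C_1$ and that $\{0\}\cup\{\varpi_i\mid i\in J\}$ is a complete set of coset representatives of $Q^\vee$ in $P^\vee$, the cardinality count forces $Z$ to be the whole stabilizer. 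Those two inputs are precisely what you defer to ``standard theory,'' and they are indeed the computational content of the cited reference (and of Bourbaki, Ch.~VI, \S 2), so the argument is complete modulo quoting them. If you wanted to make the vertex-permutation step self-contained, the cleanest route is not to chase the vertices $o_j$ directly but to check that $t_{\varpi_i}w_0^iw_0$ permutes the set of simple affine roots $\Pia$ (equivalently, the walls of $C_1$), using that for $m_i=1$ the set $N(w_0^iw_0)$ consists exactly of the $\a\in\Dp$ whose $\a_i$-coefficient is $1$ --- a fact the paper itself establishes and uses in the proof of Proposition \ref{dotaction}. Also note your parenthetical description of $w_0^iw_0$ as ``permuting the simple roots'' is not literally accurate (it permutes the walls of the alcove, i.e.\ acts as an affine diagram automorphism after composing with the translation), but this does not affect the structure of the argument.
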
 

Set
$$
LI(C_1)=\{\phi\in Isom(V^*)\mid \phi(C_1)=C_1, \ \phi\text{ linear}\}.
$$
 
 \begin{prop}\label{decaut} $LI(C_1)\cong Aut(\Pi)$ and 
$Aut(\Pia)\cong I(C_1)=LI(C_1) \ltimes  Z$.
\end{prop}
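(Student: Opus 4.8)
The plan is to deduce both assertions from the isomorphism $\phi\mapsto f_\phi$ of Proposition \ref{prima}, reading off everything from the behaviour at the affine node $\a_0$ (equivalently, the vertex $o_0=0$).

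First I would establish $LI(C_1)\cong Aut(\Pi)$. The key remark is that $\phi\in I(C_1)$ is linear precisely when it fixes the origin, i.e.\ when $f_\phi(0)=0$: with $z=\nu\circ\phi\circ\nu^{-1}$, linearity of $\phi$ amounts to $z(o_0)=o_0$, and an affine isometry with a fixed point is linear. Thus the isomorphism of Proposition \ref{prima} restricts to an isomorphism of $LI(C_1)$ onto the stabiliser $\{f\in Aut(\Pia)\mid f(0)=0\}$ of the affine node. Since deleting $\a_0$ from $\Pia$ gives back $\Pi$, the restriction $f\mapsto f|_{\{1,\dots,n\}}$ sends this stabiliser into $Aut(\Pi)$ and is visibly injective. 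For surjectivity I would take $\s\in Aut(\Pi)$, extend it by $\s(\a_0)=\a_0$, and verify the only Cartan entries involving $\a_0$: writing $(\a_0,\a_j)=(-\theta,\a_j)$ and using, as in the proof of Lemma \ref{ai}(1), that $\s$ extends to an isometry of $\h^*$ fixing $\theta$, one gets $a_{0\s(j)}=a_{0j}$ and $a_{\s(j)0}=a_{j0}$. Hence the extension lies in $Aut(\Pia)$ and restricts to $\s$, giving $LI(C_1)\cong Aut(\Pi)$.

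Next, for $I(C_1)=LI(C_1)\ltimes Z$ I would check the three internal semidirect-product conditions. Containment $Z\subseteq I(C_1)$ is Proposition \ref{IMM}. Triviality of the intersection is immediate: a non-identity element $\zeta_i=t_{\varpi_i}w_0^iw_0$ of $Z$ sends $0$ to $\varpi_i\neq0$, so it is not linear, whence $LI(C_1)\cap Z=\{Id\}$. For $I(C_1)=LI(C_1)\cdot Z$, consider $\pi\colon I(C_1)\to\{0,\dots,n\}$, $\pi(\phi)=f_\phi(0)$; by Lemma \ref{ai} its image lies in the set $S=\{j\mid m_j=1\}=\{0\}\cup J$ of special nodes. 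A one-line computation shows $\pi(\zeta_i)=i$, since $\zeta_i$ sends $o_0=0$ to $\varpi_i=\varpi_i/m_i=o_i$; thus $\pi|_Z\colon Z\to S$ is onto. Given $\phi\in I(C_1)$, choosing $\zeta\in Z$ with $f_\zeta(0)=f_\phi(0)$ gives $f_{\zeta^{-1}\phi}(0)=0$, i.e.\ $\zeta^{-1}\phi\in LI(C_1)$, so $\phi\in Z\cdot LI(C_1)$.

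The main obstacle is the normality $Z\trianglelefteq I(C_1)$, which I would handle through linear parts. Any $\phi\in I(C_1)$ permutes the walls of $C_1$, hence conjugates $s_0,\dots,s_n$ among themselves; therefore it normalises $\Wa$ and preserves the affine arrangement $\Da$. Let $p$ be the linear-part homomorphism from the group of all $\Da$-preserving isometries to $Aut(\D)=W\rtimes Aut(\Pi)$. Its kernel is the lattice of arrangement-preserving translations, namely $P^\vee$, and from this one identifies $p^{-1}(W)=\Wa^e$. Since $W\trianglelefteq Aut(\D)$, for $\phi\in I(C_1)$ and $\zeta\in Z\subseteq\Wa^e$ we get $p(\phi\zeta\phi^{-1})=p(\phi)\,p(\zeta)\,p(\phi)^{-1}\in W$, so $\phi\zeta\phi^{-1}\in\Wa^e$; as it also lies in $I(C_1)$, Proposition \ref{IMM} forces $\phi\zeta\phi^{-1}\in\Wa^e\cap I(C_1)=Z$. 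This gives normality, hence $I(C_1)=LI(C_1)\ltimes Z$, and combined with Proposition \ref{prima} it yields $Aut(\Pia)\cong I(C_1)=LI(C_1)\ltimes Z$. The delicate point to pin down carefully is the equality $p^{-1}(W)=\Wa^e$, i.e.\ that the arrangement-preserving translations are exactly $P^\vee$, which is precisely where the extended affine Weyl group enters.
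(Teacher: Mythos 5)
Your proof is correct and follows essentially the same route as the paper: you identify $LI(C_1)$ with the stabiliser of $\a_0$ in $Aut(\Pia)$ (which is $Aut(\Pi)$, checked on the Cartan entries involving $\a_0$ exactly as in the paper and in Lemma \ref{ai}), and you obtain $I(C_1)=LI(C_1)\cdot Z$ by using an element of $Z$ to move $f(\a_0)$ back to $\a_0$, just as the paper does. The only divergence is that you spell out the normality of $Z$ via linear parts and the identification $p^{-1}(W)=\Wa^e$ (together with $\Wa^e\cap I(C_1)=Z$ from Proposition \ref{im}), a point the paper dismisses with ``it is clear''; your argument for it is sound.
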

\begin{proof} First remark that 
\begin{equation}\label{piislinear}
Aut(\Pi)=\{f\in Aut(\Pia)\mid f(\a_0)=\a_0\},
\end{equation}
Indeed, it is clear that an automorphism of $\Pia$ fixing $\a_0$ restricts to an automorphism of $\Pi$. Conversely  an automorphism $f$ of $\Pi$ 
 fixes $\theta$, and in turn it fixes $\a_0\in \Pia$. 
Moreover, on one hand 
\begin{align*}a_{0i}&=\frac{2(-\theta,\a_i)}{(\a_i,\a_i)}=-2\sum_{j=1}^nm_j\frac{2(\a_j,\a_i)}{(\a_i,\a_i)}=
-\sum_{j=1}^nm_j\frac{2(f(\a_j),f(\a_i))}{(f(\a_i),f(\a_i))}= \frac{2(-\theta,f(\a_i))}{(f(\a_i),f(\a_i))}\\&=a_{0f(i)},\end{align*}
on the other hand
\begin{align*}a_{i0}&=\frac{2(\a_i,-\theta)}{(\theta,\theta)}=-\sum_{j=1}^nm_j^\vee\frac{2(\a_i,\a_j)}{(\a_j,\a_j)}=
-\sum_{j=1}^nm^\vee_j\frac{2(f(\a_i),f(\a_j))}{(f(\a_j),f(\a_j))}= \frac{2(f(\a_i),-\theta)}{(\theta,\theta)}\\&=a_{f(i)0}.\end{align*}

From \eqref{piislinear} it follows that the isometry $z_f$ on $V$ induced by $f$ fixes $o_0=0$, hence it is linear.
Thus the map $f\mapsto \nu^{-1}\circ z_f \circ \nu$ establishes a homomorphism between $Aut(\Pi)$ and $LI(C_1)$. Clearly the map $\phi\mapsto f_\phi$ is its inverse when restricted to $LI(C_1)$.

Next we prove that $LI(C_1)$ and $Z$ generate
$I(C_1)$.
If $f\in Aut(\Pia)$, let $\a_i=f(\a_0)$. Then $m_i=1$, i.e., $i\in J$, and there exists
$\phi\in Z$ such that  $f_\phi(\a_i)=\a_0$, so that  $f_\phi f$ fixes $\a_0$, hence belongs to $Aut(\Pi)$. It is clear that $Z\cap LI(C_1)=\{e\}$ and that $Z$ is normal in $I(C_1)$.
\end{proof}

Set 
$$
Z_2=\{Id_V,t_{2\varpi_i}w_0^iw_0\mid i\in J\}
$$
and
$$
I(2C_1)=\{\phi\in Isom(V^*)\mid \phi(2C_1)=2C_1\}.
$$
From Proposition \ref{decaut} it is clear that 
$$
I(2C_1)=LI(C_1)\ltimes  Z_2 \cong  I(C_1)\cong Aut(\Pia).
$$
The first isomorphism is given by the identity on $LI(C_1)$ and by the map $t_{2\varpi_i}w_0^iw_0\mapsto t_{\varpi_i}w_0^iw_0$ on $Z_2$. The second isomorphism is the one we set up in Proposition \ref{prima}. 

We have therefore a natural action of $Aut(\Pia)$ on the set of alcoves in $2C_1$. By Proposition \ref{CcPp}, this action gives an action of $Aut(\Pia)$ on $\Ab$. Note that two abelian ideals $\i,\i'$ are connected by an edge in the Hasse diagram of $\Ab$ if and only if $w_\i(C_1)$ and $w_{\i'}(C_1)$ have a face in common. Hence the action of $Aut(\Pia)$ on $\Ab$ is an automorphism of the Hasse diagram (as an abstract graph).

 If $x\in Aut(\Pia)$, let us denote by $x\cdot  \i$ the action of $x$ on $ \i\in \Ab$. On the other hand, if we identify $Aut(\Pia)$ with $I(C_1)$ as in  Proposition \ref{prima}, then $Aut(\Pia)$ acts naturally on $V^*$.
\begin{prop}\label{dotaction}
If $\i\in\Ab$ and $x\in Aut(\Pia)$, 
 then 
\begin{equation}\label{azione}\langle x\cdot  \i\rangle =x(\langle\i\rangle).
\end{equation}
In particular, if $x=f_\phi$ with $\phi=t_{\varpi_i}w_0^iw_0$, then
\begin{equation}\label{e}
\langle x\cdot\i\rangle=w_0^iw_0(\langle\i\rangle)+h^\vee\omega_i.
\end{equation}
\end{prop}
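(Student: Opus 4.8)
The plan is to transport everything to the language of $\rho$-points, where Proposition \ref{04} is available, and to use that the action on $\Ab$ is defined through $I(2C_1)$ while the weight map lands in the $I(C_1)$-picture. Recall that $\i\mapsto w_\i(\rho)=\rho+\langle\i\rangle$ identifies $\Ab$ with the set of $\rho$-points in $2C_1$, one regular weight in each alcove $C_\i=w_\i(C_1)$, and that by construction the action of $x\in Aut(\Pia)$ on $\Ab$ is the one induced on alcoves by the corresponding $\tilde x\in I(2C_1)$ under $Aut(\Pia)\cong I(C_1)\cong I(2C_1)$, i.e. $C_{x\cdot\i}=\tilde x(C_\i)$. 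First I would check that $\tilde x$ carries $\rho$-points to $\rho$-points: since $\tilde x$ stabilizes $2C_1$, permutes its alcoves (hence maps walls to walls), and stabilizes the weight lattice $P$ (clear on $LI(C_1)\cong Aut(\Pi)$; the generator $t_{2\varpi_i}w_0^iw_0$ of $Z_2$ acts on $V^*$ by $w_0^iw_0$ followed by translation by the weight $2\nu^{-1}(\varpi_i)$, whose value is computed below), it preserves the regular weights of $P\cap 2C_1$. Consequently the unique $\rho$-point of $\tilde x(C_\i)=C_{x\cdot\i}$ is the image of the one of $C_\i$, giving
\[
\rho+\langle x\cdot\i\rangle=\tilde x\big(\rho+\langle\i\rangle\big).
\]

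The heart of the matter is to reconcile this with the right-hand side of \eqref{azione}, where $x$ itself acts on $V^*$ through $I(C_1)$. The two actions differ only in their translation parts: under $I(C_1)\cong I(2C_1)$ a linear element is unchanged while a translation is doubled, so $x$ and $\tilde x$ share the same linear part $\bar x$, and $\tilde x=\bar x+2c$, $x=\bar x+c$ with $c=x(0)$. Writing $\mu=\langle\i\rangle$ and expanding $\tilde x(\rho+\mu)=\bar x(\rho)+\bar x(\mu)+2c$ against $\rho+x(\mu)=\rho+\bar x(\mu)+c$, the desired equality collapses to the single condition $\bar x(\rho)+c=\rho$, that is $x(\rho)=\rho$. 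This is the step I expect to be the main obstacle, and I would settle it conceptually rather than case by case: $x$ stabilizes $C_1$ and stabilizes $P$ (same generator-by-generator observation as above, now with the undoubled translation $\nu^{-1}(\varpi_i)\in P$), hence it stabilizes $P\cap C_1=\{\rho\}$, the latter being the content of the cited Lemma~2.2 of \cite{CMP}. Therefore $x(\rho)=\rho$, and combining with the displayed identity yields \eqref{azione}.

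Finally, \eqref{e} is obtained by making the $V^*$-action of the specific $x=f_\phi$, $\phi=t_{\varpi_i}w_0^iw_0$, explicit. Its linear part is $w_0^iw_0\in W$ (acting on weights via the $W$-equivariance of $\nu$), and its translation part is $\nu^{-1}(\varpi_i)$. The remaining computation is to identify this translation: $\nu^{-1}(\varpi_i)$ is the weight $\eta$ with $(\eta,\a_j)=\a_j(\varpi_i)=\delta_{ij}$, whence $\eta=\tfrac{2}{(\a_i,\a_i)}\omega_i$. Since $i\in J$ (i.e. $m_i=1$) forces $\a_i$ to be a long root, we have $(\a_i,\a_i)=(\theta,\theta)$ and $\tfrac{2}{(\a_i,\a_i)}=h^\vee$, so $\nu^{-1}(\varpi_i)=h^\vee\omega_i$. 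Substituting into \eqref{azione} gives $\langle x\cdot\i\rangle=w_0^iw_0(\langle\i\rangle)+h^\vee\omega_i$, as claimed. The one auxiliary fact I would flag as needing a short justification is that $m_i=1$ implies $\a_i$ long; this is a case-free consequence of $\theta$ being long together with the structure of $J$, and can in any case be read off Bourbaki's tables.
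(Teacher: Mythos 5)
Your argument is correct, and it follows the paper's overall strategy (transport to $\rho$-points via Proposition \ref{04} and the identity $w_{x\cdot\i}(\rho)=\tilde x(w_\i(\rho))$), but it replaces the paper's key computation by a different one. The paper treats the two factors of $Aut(\Pia)\cong Aut(\Pi)\ltimes Z$ separately: for linear $\phi$ the identity $\phi(\rho)=\rho$ is immediate, while for $\phi=t_{\varpi_i}w_0^iw_0$ it expands $t_{2\varpi_i}w_0^iw_0(\rho+\langle\i\rangle)$ and proves the needed identity $\rho-w_0^iw_0(\rho)=\langle N(w_0^iw_0)\rangle=h^\vee\omega_i$ by an explicit dimension/trace count on the nilradical $\n_i$. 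You instead reduce everything, uniformly in $x$, to the single statement $x(\rho)=\rho$, which you get from $x(C_1)=C_1$, $x(P)=P$ and $P\cap C_1=\{\rho\}$ (the same Lemma 2.2 of \cite{CMP} already invoked in Proposition \ref{CcPp}). The two routes are equivalent --- your $x(\rho)=\rho$ for $x=t_{\varpi_i}w_0^iw_0$ \emph{is} the identity $\rho-w_0^iw_0(\rho)=h^\vee\omega_i$ --- but yours is arguably cleaner: it avoids the trace computation and handles a general element of $Aut(\Pia)$ in one stroke, at the small cost of having to check that $Z$ stabilizes $P$ (i.e.\ $\nu^{-1}(\varpi_i)\in P$). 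The paper's version buys, as a by-product, the explicit formula $\langle N(w_0^iw_0)\rangle=h^\vee\omega_i$. One remark on the point you flagged: that $m_i=1$ forces $\a_i$ to be long needs no table lookup --- by Propositions \ref{prima} and \ref{decaut} there is an isometry in $Z\cong I(C_1)$ carrying $\a_i$ to $\a_0$, and $\a_0$ has the length of $\theta$; note the paper itself uses $(\a_i,\a_i)=(\theta,\theta)$ silently in the same computation.
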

\begin{proof} If $x\in Aut(\Pi)$,  then $x=f_\phi$ with $\phi\in LI(C_1)$. Since $C_{x\cdot\i}=\phi(C_\i)$ and $\phi(P)=P$,
$$\phi(\rho+\langle\i\rangle)=\phi(w_\i(\rho))=w_{x\cdot\i}(\rho)=\rho+\langle x\cdot\i \rangle.
$$
Since $\phi$ is linear, we have $\phi(\rho+\langle\i\rangle)=\phi(\rho)+\phi(\langle \i \rangle)$. Since $\phi(\rho)=\rho$, we have \eqref{azione}.

If $\phi=t_{\varpi_i}w_0^iw_0$ and $x=f_\phi$, then $C_{x\cdot\i}=t_{2\varpi_i}w_0^iw_0(C_\i)$.  As above we obtain
$$t_{2\varpi_i}w_0^iw_0(\rho+\langle\i\rangle)=\rho+\langle x\cdot\i \rangle.
$$
Remark that, under the identification of $V$ and $V^*$, 
$\varpi_i=\frac{2}{(\a_i,\a_i)}\omega_i=\frac{2}{(\theta,\theta)}\omega_i=h^\vee\omega_i$,
hence
 \begin{align*}
t_{2\varpi_i}w_0^iw_0(\rho+\langle \i\rangle)&=w_0^iw_0(\rho)+w_0^iw_0(\langle \i\rangle)+2h^\vee\omega_i\\
&=\rho+w_0^iw_0(\rho)-\rho+w_0^iw_0(\langle \i\rangle)+2h^\vee\omega_i\\
&=\rho-\langle N(w_0^iw_0)\rangle+w_0^iw_0(\langle \i\rangle)+2h^\vee\omega_i.
\end{align*}
  We now  observe that $\langle N(w_0^i w_0)\rangle= h^\vee \omega_i$. In fact, 
$N(w_0^i w_0)$ is the set of roots of the nilradical $\mathfrak n_i$ of the parabolic subalgebra defined by $\varpi_i$. It follows that $\langle N(w_0^i w_0)\rangle= x \omega_i$ for some $x\in\R$. Moreover $\dim \n_i =\langle N(w_0^i w_0)\rangle(\varpi_i)=x\omega_i (\varpi_i)=x\frac{ (\a_i,\a_i)}{4} tr(\varpi_i^2)=  x\frac{ (\theta,\theta)}{2}\dim\n_i$.
It follows that $x=h^\vee$, hence
$$
t_{2\varpi_i}w_0^iw_0(\rho+\langle \i\rangle)=\rho+w_0^iw_0(\langle \i\rangle)+h^\vee\omega_i,
$$
and, in turn,
$$
\langle x\cdot \i\rangle=w_0^iw_0(\langle \i\rangle)+h^\vee\omega_i
$$
as wished.
 \end{proof}

As an application, we recover a nice result by Suter on the Young lattice. Recall that the latter is the lattice of partitions of a natural number ordered by containment of the corresponding Young diagram. We display Young diagrams in the French way. Also recall that the hull of a  Young diagram is the minimal rectangular diagram containing it.\vskip5pt
For a positive integer $n$ let $Y_n$ be the Hasse graph for the subposet $\mathfrak Y_n$ of the 
Young  lattice corresponding to those diagrams whose hulls are contained in the staircase
diagram for the partition $(n -1, n -2, . . . , 1)$. 
\begin{theorem} \cite[Theorem 2.1]{Su} If $n \ge 3$, the dihedral group  of order
$2n$ acts faithfully on the (undirected) graph $Y_n$.
\end{theorem}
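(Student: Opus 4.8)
The plan is to present $Y_n$ as the (undirected) Hasse graph of the poset $\Ab$ of abelian ideals for $\g$ of type $A_{n-1}$, and to let the dihedral group act through the group $Aut(\Pia)$ analysed above.

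First I would specialise the setup to $A_{n-1}$. Here $\theta=\a_1+\dots+\a_{n-1}$, so all $m_i=1$ and $J=\{1,\dots,n-1\}$; hence $|Z|=n$, while $Aut(\Pi)\cong\ZZ/2\ZZ$ is the flip of the diagram $A_{n-1}$, nontrivial precisely because $n\ge 3$. By Proposition~\ref{decaut}, $Aut(\Pia)\cong I(C_1)=LI(C_1)\ltimes Z\cong(\ZZ/2\ZZ)\ltimes(\ZZ/n\ZZ)$, the dihedral group of order $2n$; equivalently $\Pia$ is the cycle $\widetilde{A}_{n-1}$ on $n$ nodes, whose diagram automorphism group is this dihedral group. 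I will write $D$ for it and $r$ for a generator of the rotation subgroup $Z$.

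Next I would identify the two posets. Writing $\Dp=\{e_i-e_j\mid 1\le i<j\le n\}$, I attach to the root $e_i-e_j$ the box in row $i$ and column $n+1-j$; row $i$ then carries $n-i$ boxes, so the positive roots fill the staircase $(n-1,\dots,1)$, and $\a\le\be$ in the root poset corresponds to the box of $\be$ lying weakly below and to the left of that of $\a$. Thus a dual order ideal $\Phi\subseteq\Dp$ becomes a Young diagram $\lambda$ contained in the staircase, and I would check that $\Phi$ is abelian if and only if $\lambda_1+\ell(\lambda)\le n$, i.e.\ exactly when the hull of $\lambda$ fits in the staircase. Indeed, a composable pair $e_i-e_j,\ e_j-e_k\in\Phi$ forces $n+1-\lambda_1\le j\le\ell(\lambda)$, hence $\lambda_1+\ell(\lambda)\ge n+1$; conversely, if $\lambda_1+\ell(\lambda)\ge n+1$ then, writing $\ell=\ell(\lambda)$, both $e_1-e_\ell$ and $e_\ell-e_n$ lie in $\Phi$ and sum to $\theta$. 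By Proposition~\ref{CcPp} this gives an order isomorphism $\Ab\cong\mathfrak Y_n$ (in particular $|\Ab|=2^{n-1}=|\mathfrak Y_n|$), so $Y_n$ is the Hasse graph of $\Ab$. By the discussion following Proposition~\ref{decaut}, $Aut(\Pia)$ acts on $\Ab$ by automorphisms of this Hasse graph; transporting along the isomorphism yields the desired action of $D$ on $Y_n$ by graph automorphisms.

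It remains to prove faithfulness. Let $K\trianglelefteq D$ be the kernel of the action on $Y_n$; any element of $K$ fixes every vertex, i.e.\ every ideal, and hence by \eqref{azione} fixes every weight $\langle\i\rangle$. Let $\i_0$ be the zero ideal, so $\langle\i_0\rangle=0$. For a nontrivial $x\in Z$, say $x=f_\phi$ with $\phi=t_{\varpi_i}w_0^iw_0$ and $i\in J$, formula~\eqref{e} gives $\langle x\cdot\i_0\rangle=w_0^iw_0(0)+h^\vee\omega_i=h^\vee\omega_i\ne 0$, so $x\cdot\i_0\ne\i_0$ and $x\notin K$; thus $K\cap Z=\{1\}$. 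Writing $D=\langle r,s\mid r^n=s^2=1,\ srs=r^{-1}\rangle$ with $Z=\langle r\rangle$, any normal subgroup containing a reflection $t=sr^k$ also contains $t^{-1}rtr^{-1}=r^{-2}$, which is nontrivial since $n\ge 3$, while a normal subgroup containing no reflection lies in $Z$. In either case a nontrivial normal subgroup meets $Z$ nontrivially, so $K=\{1\}$ and the action is faithful.

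I expect the main obstacle to be the poset identification $\Ab\cong\mathfrak Y_n$: one must fix the root/box dictionary so that ``upward closed and abelian'' matches ``contained in the staircase with $\lambda_1+\ell(\lambda)\le n$'' and verify order compatibility, aligning all conventions. Once this is in place the symmetry and its faithfulness follow cleanly from the machinery of Section~4, faithfulness being essentially immediate from \eqref{e} applied to the zero ideal together with the elementary normal-subgroup structure of the dihedral group.
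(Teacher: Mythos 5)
Your proof is correct, and it lives in the same framework as the paper's (realizing $Y_n$ as the Hasse graph of $\Ab$ in type $A_{n-1}$ and letting $Aut(\Pia)\cong I(C_1)=LI(C_1)\ltimes Z$ act), but it reaches faithfulness by a genuinely different and more abstract route. The paper's proof of this theorem is the proposition that follows it: after setting up the same ideals--diagrams dictionary, it explicitly identifies Suter's two combinatorial generators, showing $\tau(\l(\i))=\l(-w_0\cdot\i)$ and $\s_n(\l(\i))=\l(f_\xi\cdot\i)$ with $\xi=t_{\varpi_1}w_0^1w_0$, by computing both sides on the weights $\langle\i\rangle$ via Proposition \ref{dotaction}; faithfulness and the order of $\s_n$ then come for free from the identification with the $Aut(\Pia)$-action. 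You instead never touch Suter's operators: you exhibit \emph{some} dihedral action (which suffices for the statement as literally worded, an existence claim) and prove its kernel is trivial by combining formula \eqref{e} applied to the zero ideal with the classification of normal subgroups of $D_n$ for $n\ge 3$. What your approach buys is a cleaner, computation-free faithfulness argument that would generalize to other types; what it gives up is the actual content Suter (and the paper) are after, namely that the concrete flip $\tau$ and sliding move $\s_n$ on partitions are the ones realizing this symmetry. Your verification that ``abelian dual order ideal'' translates to $\lambda_1+\ell(\lambda)\le n$ is carried out in more detail than in the paper (which asserts the bijection by displaying the matrix picture), and is correct; only minor convention bookkeeping (upper vs.\ lower triangular Borel, French display) separates your dictionary from theirs.
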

Our proof of this theorem relies on the connection between the symmetries of the  Young lattice and $Aut(\Pia)$. This connection has already been observed in \cite{S}. 

Specialize to $\g=sl(n,\C)$, and fix   as Borel subalgebra the set of lower triangular matrices. Let $e_{ij}$ denote the elementary matrices and set $\e_i(e_{hh})=\d_{ih}$. Our choice of $\b$ gives  $\Dp=\{\e_i-\e_j\in\R^n\mid i>j\}$; the corresponding simple roots are $\a_i=\e_{i+1}-\e_i$ ($i=1,\dots,n-1$). Moreover, the  positive root spaces are 
 $\g_{\e_i-\e_j}=\C e_{ij} (i>j)$. Then abelian ideals of $\b$ correspond   bijectively via
 $$\l_1\geq\ldots\geq\l_k \longleftrightarrow \sum_{h=1}^k\sum_{j=1}^{\l_h}\C \g_{\e_{n-h+1}-\e_j}$$
 to 
subspaces of strictly lower triangular matrices such that their non-zero entries form a Young diagram whose hull is contained 
in the staircase
diagram for the partition $(n -1, n -2, . . . , 1)$:\vskip5pt
{\tiny$$\left(\begin{array}{ccccccc}
0&0&0&0&0&0&0\\
0&0&0&0&0&0&0\\
0&0&0&0&0&0&0\\
\g_{\e_4-\e_1}&0&0&0&0&0&0\\
\g_{\e_5-\e_1}&0&0&0&0&0&0\\
\g_{\e_6-\e_1}&\g_{\e_6-\e_2}&0&0&0&0&0\\
\g_{\e_7-\e_1}&\g_{\e_7-\e_2}&\g_{\e_7-\e_3}&\g_{\e_7-\e_4}&0&0&0\end{array}\right)
$$
\vskip5pt
\centerline{\begin{Young}
   \cr
  \cr
  &\cr
  & & &\cr
\end{Young}
}}
\vskip10pt
Let $\l(\i)$ be the diagram (or partition) corresponding to $\i\in\Ab$. Suter defines an action on $\mathfrak Y_n$ of two operators $\tau, \sigma_n$ which generate  the dihedral group  of order
$2n$. The operator $\tau$  is the involution given by flipping the diagrams along the diagonal ``South-West to North-East''; the other move is what he calls the {\it sliding move}. In formulas, if 
$\l=(\l_1,\ldots \l_m)$ is a partition whose diagram belongs to $\mathfrak Y_n$ (so that $\l_1\geq \ldots\geq\l_m,\, \l_1+m\leq n$), then 
$$\s_n(\l)=(\l_2+1,\ldots,\l_m+1,\underbrace{1,\ldots,1}_{n-m-\l_1}).$$
The term sliding comes form the following equivalent description: if $\mu=\l^t$ and $\nu= \s_n(\l)^t, $ then $\nu_1=n-\l_1-1, \nu_i=\mu_{i-1}-1,\, i\geq 2$.
\begin{prop}ÊSet $\xi=t_{\varpi_1}w_0^1w_0\in I(C_1)$. We have
$$\tau(\l(\i)) = \l(-w_0\cdot\i) , \qquad \s_n(\l(\i))=\l(f_\xi\cdot\i).
$$
In particular $\s_n$ has order $n$. Hence the action of the dihedral group generated by $\tau,\s_n$ on $\mathfrak Y_n$ is precisely the action 
of $Aut(\Pia)$ on $\Ab$. In particular it is faithful.
\end{prop}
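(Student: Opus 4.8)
The plan is to reduce both identities to the weight encoding $\i\mapsto\wi$, which is injective by Proposition \ref{CcPp}, and then to feed them into Proposition \ref{dotaction}, which already records how $\wi$ transforms under $Aut(\Pia)$. Throughout, for $o\in\{\tau,\s_n\}$ let $o(\i)$ denote the ideal whose diagram is $o(\l(\i))$. Reading the cells off the matrix realization, an ideal with $\l(\i)=(\l_1,\dots,\l_m)$ has weight $\wi=\sum_{h=1}^m\sum_{j=1}^{\l_h}(\e_{n-h+1}-\e_j)$; equivalently, the coefficient of $\e_a$ in $\wi$ equals $\l_{n-a+1}-\l^t_a$, where $\l_h:=0$ for $h\notin\{1,\dots,m\}$ and $\l^t$ is the conjugate partition. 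Thus every assertion becomes a comparison of coefficient vectors in $\R^n$, and it suffices to prove $\langle\tau(\i)\rangle=(-w_0)(\wi)$ and $\langle\s_n(\i)\rangle=w_0^1w_0(\wi)+h^\vee\omega_1$.

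For $\tau$, note that in the matrix realization the flip along the SW--NE diagonal is the reflection $(i,j)\mapsto(n+1-j,n+1-i)$ of entry positions, which carries the root $\e_i-\e_j$ to $\e_{n+1-j}-\e_{n+1-i}=(-w_0)(\e_i-\e_j)$, since $w_0(\e_k)=\e_{n+1-k}$. Hence $\Phi_{\tau(\i)}=(-w_0)(\Phi_\i)$ and $\langle\tau(\i)\rangle=(-w_0)(\wi)$. Because $-w_0\in Aut(\Pi)$ for $n\ge 3$, the first part of Proposition \ref{dotaction} gives $(-w_0)(\wi)=\langle(-w_0)\cdot\i\rangle$, and injectivity of the weight yields $\tau(\i)=(-w_0)\cdot\i$, i.e.\ $\tau(\l(\i))=\l(-w_0\cdot\i)$.

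For $\s_n$, first I would identify $w_0^1w_0$ as a Coxeter element: writing both longest elements as permutations of the $\e_k$ (with $w_0\colon k\mapsto n+1-k$, and $w_0^1$ fixing $\e_1$ and reversing $\e_2,\dots,\e_n$), one finds $w_0^1w_0(\e_k)=\e_{k+1}$ with indices mod $n$, i.e.\ the cyclic shift $c$. Since $h^\vee=n$ and $\omega_1\equiv-\e_1$ modulo the all-ones vector (so that $n\omega_1=\sum_m\e_m-n\e_1$), equation \eqref{e} becomes $\langle f_\xi\cdot\i\rangle=c(\wi)+\sum_m\e_m-n\e_1$. It then remains to match this with $\langle\s_n(\i)\rangle$: with $\mu=\s_n(\l)$ one must verify, for every $a$, that $\mu_{n-a+1}-\mu^t_a$ equals the coefficient of $\e_a$ in $c(\wi)+\sum_m\e_m-n\e_1$. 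I expect this last verification to be the main obstacle; it is a finite but fiddly case analysis organized by where the index $a$ lands among the three blocks of $\mu$ (the shifted parts $\l_2+1,\dots,\l_m+1$, the trailing $1$'s, and the empty rows), in which the term $-n\e_1$ absorbs the wrap-around $\e_n\mapsto\e_1$ of the shift $c$ and the constant $+1$ in each coordinate accounts for the bottom row and left column created by the sliding move. Once the identity holds, injectivity again gives $\s_n(\i)=f_\xi\cdot\i$.

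Finally, $f_\xi$ corresponds to $\xi\in Z$, and for type $A_{n-1}$ the group $Z$ is the center of $SL_n(\C)$, cyclic of order $n$ and generated by $\xi$; hence $\s_n$ has order $n$. The identifications $\tau=-w_0$ and $\s_n=f_\xi$ realize, respectively, the reflection fixing the node $\a_0$ and the rotation by one step, which together generate the full automorphism group $Aut(\Pia)$ of the cycle $\tilde A_{n-1}$, dihedral of order $2n$ for $n\ge 3$. Thus the dihedral group $\langle\tau,\s_n\rangle$ acting on $\mathfrak Y_n\cong\Ab$ is precisely the image of the $Aut(\Pia)$-action. This action is faithful because $Aut(\Pia)\cong I(2C_1)$ permutes the alcoves of $2C_1$ faithfully---an isometry fixing $C_1$ together with all of its neighbours fixes every facet, hence every vertex, of $C_1$, and so is the identity---while the alcoves biject with $\Ab$ by Proposition \ref{CcPp}.
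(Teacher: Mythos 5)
Your strategy is the paper's: express $\wi$ in the $\e$-coordinates (your coefficient formula agrees with \eqref{peso}), transport everything to weights via the injectivity of $\i\mapsto\wi$ from Proposition \ref{CcPp}, and invoke Proposition \ref{dotaction}. The $\tau$ half is complete, and deriving $\Phi_{\tau(\i)}=(-w_0)(\Phi_\i)$ from the reflection of matrix positions is a slightly cleaner route to $\langle\tau(\i)\rangle=-w_0(\wi)$ than reading it off \eqref{peso}. The gap is in the $\s_n$ half: after correctly identifying $w_0^1w_0$ with the cyclic shift $c:\e_k\mapsto\e_{k+1}$ and $h^\vee\omega_1=-(n-1)\e_1+\sum_{i\ge2}\e_i$, you stop at the assertion that matching $c(\wi)+h^\vee\omega_1$ with $\langle\s_n(\i)\rangle$ is ``the main obstacle'' and leave it as an unperformed case analysis. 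That verification is the entire content of the second identity --- it is precisely the displayed computation in the paper's proof --- so as written the proposal omits its decisive step. It is also not really a case analysis: applying $c$ to \eqref{peso} gives $\l_1\e_1+\sum_{i=2}^m\l_i\e_{n-i+2}-\sum_{i=1}^{r}\l'_{i}\e_{i+1}$, and adding $h^\vee\omega_1$ and using $\l'_1=m$ yields $-(n-1-\l_1)\e_1-\sum_{i=2}^r(\l'_i-1)\e_{i+1}+\sum_{i=2}^m(\l_i+1)\e_{n-i+2}+\sum_{i=r+2}^{n-m+1}\e_i$, which is \eqref{peso} evaluated on the partition $(\l_2+1,\dots,\l_m+1,1,\dots,1)=\s_n(\l)$; no splitting into cases is needed.

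A smaller point: your faithfulness argument does not work as stated. Inside $2C_1$ the alcove $C_1$ has a single neighbour, namely $s_0(C_1)$, because its only facet interior to $2C_1$ is the one on the hyperplane $\theta(x)=1$; so ``fixing $C_1$ and all of its neighbours'' within the poset $\Ab$ does not force an isometry to fix all facets of $C_1$. A correct version in type $A_{n-1}$: an element of $I(2C_1)$ fixing every alcove of $2C_1$ fixes, for each $i$, the unique alcove of $2C_1$ whose closure contains the vertex $2o_i$, hence fixes every vertex of the simplex $2C_1$ and is the identity. (Alternatively, faithfulness follows once the two identities are proved, since $\tau$ and $\s_n$ visibly generate a dihedral group of order $2n$ of permutations of $\mathfrak Y_n$ for $n\ge3$.) The rest of your closing paragraph --- $\xi$ generating $Z\cong\ZZ/n\ZZ$ and $-w_0$, $f_\xi$ generating $Aut(\Pia)$ as reflection and rotation of the cycle --- is correct and matches the intended reading of the statement.
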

\begin{proof} First remark that if $\l(\i)=(\l_1,\ldots,\l_m),\, \l_1+m\leq n, \i\in\Ab$, and 
$\l_i^t=(\l'_1,\ldots,\l'_r)$, then
\begin{equation}\label{peso}
\langle\i\rangle=\sum_{i=1}^m\l_i\e_{n-i+1}-\sum_{i=1}^r\l'_i\e_i.
\end{equation}
Since $-w_0(\e_i)=\e_{n-i+1}$, we have
$-w_0(\langle\i\rangle)=\sum_{i=1}^m\l_i\e_{i}-\sum_{i=1}^r\l'_i\e_{n-i+1}$, which is 
precisely $\langle\i'\rangle$, where $\i'$ is such that $\l(\i')=\tau(\l(\i))$. It follows from Proposition \ref{dotaction} that $\i'=-w_0\cdot\i$. 
Next, we compute $t_{\varpi_1}w_0^1w_0(\langle\i\rangle)$. Recall that $h^\vee\omega_1=-(n-1)\e_1+\sum_{i=2}^n\e_i$  and that $w_0^1w_0$ is the cycle $(1,2,\ldots,n)$. Hence, 
\begin{align*}t_{\varpi_1}w_0^1w_0(\langle\i\rangle)&=
\l_1e_1+\sum_{i=2}^m\l_i\e_{n-i+2}-\sum_{i=2}^{r+1}\l'_{i-1}\e_i-(n-1)\e_1+\sum_{i=2}^n\e_i\\
&=(\l_2+1)\e_n+\ldots+(\l_m+1)\e_{n-m+2}+\sum^{n-m+1}_{i=r+2}\e_i\\
&-(n-1-\l_1)\e_1-(\l'_2-1)\e_2-\dots-(\l'_r-1)\e_{r+1},
\end{align*}
which is 
precisely $\langle\i'\rangle$, where $\i'$ is such that $\l(\i')=\s_n(\l(\i))$. It follows from Proposition \ref{dotaction} that $\i'=f_\xi\cdot\i$. 
\end{proof}
\section{Symmetries of $\Ab$}
This section is devoted to the proof of Theorem \ref{T} below.
We will exploit the poset isomorphism between $\Wab$ and $\Ab$ described in Remark \ref{posetiso},
so we need to translate the action of $Aut(\Pi)$ on $\Ab$ into an action on $\Wab$.

\begin{lemma}\label{posetaction}
If $\phi\in LI(C_1)$ then $w_{f_\phi\cdot\i}=\phi w_\i\phi^{-1}$.
In particular, if $w_\i=s_{i_1}\dots s_{i_r}$ is a reduced expression for $w_\i$ and $f\in Aut(\Pi)$, then $s_{f(i_1)}\dots s_{f(i_r)}$ is a reduced expression for $w_{f\cdot\i}$.
\end{lemma}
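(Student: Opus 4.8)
The plan is to push everything through the way a linear symmetry $\phi$ permutes the affine simple reflections, so the first thing I would record is the conjugation rule for generators. Given $\phi\in LI(C_1)$ with associated $f=f_\phi\in Aut(\Pi)\subseteq Aut(\Pia)$, equation \eqref{rel} gives $\phi(\a_i)=\a_{f(i)}$ for all $i=0,\dots,n$, where we set $f(0)=0$ since an automorphism of $\Pi$ fixes $\a_0$ by \eqref{piislinear}. Now $s_i$ is the orthogonal reflection in the $i$-th wall of $C_1$, and $\phi$ is an isometry fixing $C_1$ that carries the vertex $o_i$ to $o_{f(i)}$, hence the $i$-th wall to the $f(i)$-th wall. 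Since conjugating an orthogonal reflection by an isometry yields the reflection in the image hyperplane, I obtain $\phi s_i\phi^{-1}=s_{f(i)}$ for every $i$. In particular $\phi$ stabilizes $\Da$, so it normalizes $\Wa$, and conjugation by $\phi$ is a diagram automorphism of the Coxeter system $(\Wa,\{s_0,\dots,s_n\})$.

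With this in hand the identity $w_{f_\phi\cdot\i}=\phi w_\i\phi^{-1}$ is almost immediate. As recalled in the proof of Proposition \ref{dotaction}, for $\phi\in LI(C_1)$ the action on alcoves in $2C_1$ is exactly $C_{f_\phi\cdot\i}=\phi(C_\i)$. Writing $C_\i=w_\i(C_1)$ and using $\phi(C_1)=C_1$, I compute
\[
C_{f_\phi\cdot\i}=\phi\,w_\i(C_1)=(\phi w_\i\phi^{-1})\,\phi(C_1)=(\phi w_\i\phi^{-1})(C_1).
\]
By the first paragraph $\phi w_\i\phi^{-1}\in\Wa$, and since $w\mapsto w(C_1)$ is a bijection from $\Wa$ onto the set of alcoves (Proposition \ref{CcPp}), uniqueness of the alcove representative forces $w_{f_\phi\cdot\i}=\phi w_\i\phi^{-1}$.

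The ``in particular'' statement then follows by conjugating a reduced word. If $w_\i=s_{i_1}\cdots s_{i_r}$ is reduced, then
\[
w_{f\cdot\i}=\phi w_\i\phi^{-1}=(\phi s_{i_1}\phi^{-1})\cdots(\phi s_{i_r}\phi^{-1})=s_{f(i_1)}\cdots s_{f(i_r)}.
\]
Because conjugation by $\phi$ merely permutes the Coxeter generators, it preserves length, so the right-hand word has length $r=\ell(w_\i)=\ell(w_{f\cdot\i})$ and is therefore reduced. The step needing the most care is the first paragraph: one must genuinely verify that $\phi$ stabilizes the affine root system and permutes the walls of $C_1$ according to $f$, so that conjugation stays inside $\Wa$ and sends generators to generators. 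Once that normalization fact is secured, the remaining steps are bookkeeping resting on the bijection between $\Wa$ and the alcoves.
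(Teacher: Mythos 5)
Your proof is correct and follows essentially the same route as the paper: the key step in both is the one-line alcove computation $C_{f_\phi\cdot\i}=\phi(w_\i(C_1))=(\phi w_\i\phi^{-1})(\phi(C_1))=(\phi w_\i\phi^{-1})(C_1)$ combined with the simple transitivity of $\Wa$ on alcoves. Your additional first paragraph (that $\phi s_i\phi^{-1}=s_{f(i)}$, so conjugation is a diagram automorphism of the Coxeter system) is left implicit in the paper but is exactly the right justification for the ``in particular'' clause about reduced expressions.
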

\begin{proof}
It is enough to observe that 
$$
w_{f_\phi\cdot\i}(C_1)=\phi(w_\i(C_1))=\phi w_\i\phi^{-1}(\phi(C_1))=\phi w_\i\phi^{-1}(C_1).
$$
\end{proof}

We can define a labeling on the edges of Hasse diagram $H_{\Ab}$ of $\Ab$ by the following procedure: if $u,v\in \mathcal W, u<w$ are adjacent in $H_\Ab$, then 
 $v=us_i$. We assign the label $i$ to the edge $u \to us_i$. We number diagrams as in \cite{S}.
\begin{lemma}\label{fund} If $w\in\Waab$, then any reduced expression of $w$ avoids substrings of the form $s_\a s_\beta s_\a$ except when $\a$ is a long simple root and  $\beta$ is a short simple root. 
\end{lemma}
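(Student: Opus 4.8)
The plan is to extract from the hypothetical substring three explicit inversions of $w$ and to play them off against the defining property of minuscule elements, namely that every root in $N(w)$ lies at affine level one.

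Concretely, I would fix a reduced expression $w=s_{i_1}\cdots s_{i_r}$ in which $s_\a s_\be s_\a$ occurs, say in positions $p+1,p+2,p+3$, and put $x=s_{i_1}\cdots s_{i_p}$. By the description of $N(w)$ recalled before the statement, the three consecutive inversions produced by the substring are
$$n_1=x(\a),\qquad n_2=x s_\a(\be),\qquad n_3=x s_\a s_\be(\a),$$
and all three belong to $N(w)$. Since $w\in\Waab$ we have $N(w)=\d-\Phi$ with $\Phi\subseteq\Dp$, so each $n_t$ has the form $\d-\gamma$ with $\gamma\in\Dp$. Introducing the linear ``constant term'' functional $c\colon F\to\R$, $c(f)=f(0)$ (so that $c(\d)=1$ and $c(\gamma)=0$ for every $\gamma\in\D$), this says precisely that $c(n_1)=c(n_2)=c(n_3)=1$.

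The heart of the matter is a uniform rank-two identity. Inside the finite rank-two subsystem of $\Da$ generated by the simple roots $\a,\be$, the reflection formulas $s_\a(\be)=\be-\be(\a^\vee)\a$ and $s_\be(\a)=\a-\a(\be^\vee)\be$ give, after a one-line computation,
$$\a+s_\a s_\be(\a)=-\a(\be^\vee)\,s_\a(\be).$$
Applying $x$, which acts linearly on $F$, turns this into $n_1+n_3=-\a(\be^\vee)\,n_2$ in $F$. Evaluating the linear functional $c$ and using $c(n_t)=1$ then forces $1+1=-\a(\be^\vee)$, i.e. $\a(\be^\vee)=\tfrac{2(\a,\be)}{(\be,\be)}=-2$.

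It remains only to read off the geometry. Since $\a(\be^\vee)=-2$ is negative, $\a$ and $\be$ are adjacent, and the constraint $\a(\be^\vee)\,\be(\a^\vee)\in\{1,2,3\}$ (with the product necessarily even) forces $\be(\a^\vee)=-1$; hence $(\a,\a)=2(\be,\be)$, that is, $\a$ is long and $\be$ is short, exactly the permitted exception. The main point to get right is the bookkeeping that identifies $n_1,n_2,n_3$ as the inversions attached to the substring and that the level-one condition $c\equiv1$ on $N(w)$ is available. I would stress that abelianness of $\Phi$ is not used here, only the inclusion $\Phi\subseteq\Dp$, and that the computation is uniform across affine types and indifferent to whether $\a$ or $\be$ equals the affine simple root $\a_0$, since everything is carried out in the reflection representation of $\Wa$ on $F$.
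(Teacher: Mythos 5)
Your proof is correct and follows essentially the same route as the paper: both extract the three consecutive inversions $x(\a)$, $xs_\a(\be)$, $xs_\a s_\be(\a)$ attached to the substring and play them against the fact that every element of $N(w)$ has the form $\d-\gamma$ with $\gamma\in\Dp$. The paper argues by contradiction after splitting into the two forbidden length configurations (where $s_\be(\a)=\a+\be$), while you run the computation uniformly and solve for $\a(\be^\vee)=-2$; the only point deserving a footnote is that the bound $\a(\be^\vee)\be(\a^\vee)\le 3$ fails in type $\widehat A_1$, where the lemma is vacuous anyway because no minuscule element has length $\ge 3$.
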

\begin{proof} In the contrary case, there exists a reduced expression of the form $w=w's_\a s_\beta s_\a w''$, with either $\a,\beta$ of the same length or $\a$ short and $\beta$ long.
 Remark that in both cases $s_\beta(\a)=\a+\beta$, so that 
$s_\a s_\beta(\a)= s_\a(\beta)-\a$. Then $N(w)$ contains 
$w'(\a),w'(s_\a(\beta)),w'(s_\a(\beta)-\a)$, against the fact that $w$ encodes an abelian ideal.
\end{proof}
\begin{cor}\label{bendef} The only way to change a reduced expression for $w\in \Waab$ is to  switch  two consecutive commuting simple reflections.
In particular, given $w\in\Waab$,  any  reduced expression of $w$ contains the same number of occurrences of a simple reflection.
\end{cor}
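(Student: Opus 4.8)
The plan is to prove Corollary~\ref{bendef} as a direct consequence of Lemma~\ref{fund} together with the standard theory of reduced expressions in Coxeter groups. Recall the Word Property (Matsumoto--Tits): any two reduced expressions of an element $w$ in a Coxeter group are connected by a sequence of braid moves, i.e.\ substitutions replacing a substring $\underbrace{s_\a s_\be s_\a \cdots}_{m_{\a\be}}$ by $\underbrace{s_\be s_\a s_\be \cdots}_{m_{\a\be}}$, where $m_{\a\be}$ is the order of $s_\a s_\be$. First I would recall that since $\Wa$ is a Coxeter group, this property applies to all the reduced expressions of a given $w\in\Waab$.

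The key observation is that a braid move with $m_{\a\be}=2$ is exactly the switch of two consecutive commuting simple reflections $s_\a s_\be \leftrightarrow s_\be s_\a$, which is the only move allowed in the statement. So what must be excluded are the braid moves with $m_{\a\be}\ge 3$. I would argue as follows: if a reduced expression of $w$ admitted a braid move with $m_{\a\be}\ge 3$, then that expression would contain the substring $s_\a s_\be s_\a$ (the first three letters of the braid word, which is a genuine substring precisely because $m_{\a\be}\ge 3$). By Lemma~\ref{fund}, such a substring can occur in a reduced expression of $w\in\Waab$ only when $\a$ is a long simple root and $\be$ is a short simple root. But for two simple reflections $s_\a, s_\be$ with $\a$ long and $\be$ short one has $m_{\a\be}\in\{3,4,6\}$, and in every such case the braid relation is symmetric: the braid word also begins $s_\be s_\a s_\be$, so the same reduced expression, read after applying the move, would contain $s_\be s_\a s_\be$ with $\be$ short and $\a$ long, now violating Lemma~\ref{fund}. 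Thus \emph{no} braid move of length $\ge 3$ can be performed on a reduced expression of $w$ without producing a forbidden substring, and all transformations between reduced expressions reduce to commutation switches.

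Here I should be careful about the precise logical role of Lemma~\ref{fund}: the lemma forbids the substring $s_\a s_\be s_\a$ \emph{in any reduced expression} of $w\in\Waab$ unless $(\a,\be)$ is (long, short). A braid move of length $m\ge 3$ transforms one reduced expression of $w$ into another reduced expression of $w$; both are expressions of the same minuscule element, so Lemma~\ref{fund} applies to both. The target expression contains $s_\be s_\a s_\be$, and since the roles of $\a$ and $\be$ are interchanged, exactly one of the two expressions must contain a forbidden substring. Hence a genuine (non-commuting) braid move is impossible, proving the first assertion.

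The final sentence then follows immediately: commutation switches neither create nor destroy occurrences of any fixed simple reflection $s_i$ in the word, they only permute letters. Since every reduced expression of $w$ is reachable from any other by such switches, the multiset of letters—and in particular the number of occurrences of each $s_i$—is an invariant of $w\in\Waab$. The main obstacle I anticipate is the verification that the symmetric braid word issue is handled correctly: one must confirm that for a (long, short) pair the braid move really does reintroduce a forbidden configuration, rather than somehow being exempt; this is where the symmetry $m_{\a\be}=m_{\be\a}$ and the explicit shape of the braid words of length $3,4,6$ are used.
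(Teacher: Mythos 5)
Your proof is correct and follows essentially the same route as the paper: both invoke the Matsumoto--Tits word property and then use Lemma \ref{fund} to rule out every braid move of order $m_{\a\be}\ge 3$, leaving only commutation switches. The paper's case analysis differs only cosmetically (for $m_{\a\be}=3$ it notes the two roots have equal length so the pattern is forbidden outright, and for $m_{\a\be}>3$ it finds the forbidden pattern inside a single braid word), whereas you compare the source and target expressions to get contradictory length assignments; the one minor slip is your claim that a (long, short) pair can have $m_{\a\be}=3$ --- it cannot, since $m_{\a\be}=3$ forces equal lengths --- but this does not affect the validity of your argument.
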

\begin{proof} It is well-known (see \cite{Mat}) that in a Coxeter group it is possible to pass from a reduced expression of an element to another by switching commuting generators or by applying braid relations. If $w\in\Waab$, the latter moves are forbidden by the above Lemma. Indeed, let $m_{\a,\beta}$ be the order of $s_\a s_\beta$. If $m_{\a,\beta}=3$, then $\a,\beta$ have the same length and the 
braid relation $s_\a s_{\beta}s_\a=s_{\beta}s_\a s_{\beta}$ is forbidden by the Lemma. If $m_{\a,\beta}>3$, then in a braid relation the forbidden pattern appears.
\end{proof}
\begin{rem}\label{previous}
Observe that if $v$ has at least two reduced expressions, then in the order ideal generated by $v$  a diamond
$$
\xymatrix@R+3pt@C+1pt{ 
&&&{\bullet}\ar@{-}[dr]^j\ar@{-}[dl]_i&&&\\ 
&&{\bullet}\ar@{-}[dr]_j&&{\bullet}\ar@{-}[dl]^i&\\  
&&&{\bullet}&&\\ 
}
$$
appears. We next show that diamonds  in $H_{\Ab}$ occur precisely in this situation.
Indeed, if a diamond
$$
\xymatrix@R+3pt@C+1pt{ 
&&&{\bullet}\ar@{-}[dr]^j\ar@{-}[dl]_i&&&\\ 
&&{\bullet}\ar@{-}[dr]_h&&{\bullet}\ar@{-}[dl]^k&\\  
&&&{w}&&\\ 
}
$$
occurs in $H_{\Ab}$, then 
$i=k$, $j=h$, and $s_is_j=s_js_i$. This follows observing that $ws_hs_i=ws_ks_j$ obviously implies $s_hs_i=s_ks_j$ and the latter relation 
holds if and only if $h=j, k=i.$
 \end{rem}
\
\begin{rem}\label{51}
The minimal abelian ideal is $\{0\}$, and there is just one 1-dimensional ideal, spanned by a highest root vector. Both are contained in any other abelian ideal in $\b$. In terms of alcoves, the first 
corresponds to $C_1$ and the second to $s_{0}(C_1)$.
Thus the Hasse diagram $H_{\Ab}$ starts with a chain
\begin{equation}\label{f1}
\xymatrix@R+2pt@C+1pt{ 
{s_0}\ar@{-}[d]_0\\  
{e}\\ 
}
\end{equation}
$e$ being the neutral element of $\Wa$.
\end{rem}

Set $\Waab_k=\{w\in\Waab\mid \ell(w)\le k\}$. Denote by $\Pi'$  the set of labels $i$ appearing in $H_{\Ab}$. Thus $\Pi'$ is the set of $i$ such that $s_i$ occurs in  a reduced expression of an element of $\Waab$. Indeed, $\Pi=\Pi'$ in any case except type $C$, in which the simple reflection corresponding to the long simple root in the (finite) Dynkin diagram does not appear.
\vskip5pt
Let $Aut(\Ab)$  be the set of poset automorphisms of $\Ab$.
\begin{theorem}\label{T} If $\g$ is not of type $C_3$, then $Aut(\Ab)\cong Aut(\Pi)$.
\end{theorem}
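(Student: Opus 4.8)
The plan is to prove the two inclusions separately. \textbf{The easy direction}, $Aut(\Pi)\hookrightarrow Aut(\Ab)$, is essentially already in hand: by Proposition \ref{decaut} an element $f\in Aut(\Pi)$ is realized by a \emph{linear} isometry $\phi\in LI(C_1)$, and by Proposition \ref{dotaction} the induced action on weights is $\langle f\cdot\i\rangle=\phi(\langle\i\rangle)$ with $\phi(\rho)=\rho$. Since $\phi$ is a linear isometry preserving $C_1$, it permutes $\Dp$ and hence preserves the weight order described in Remark \ref{posetiso}; thus $f$ acts as a genuine poset automorphism, and $Aut(\Pi)\to Aut(\Ab)$ is an injective homomorphism. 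It remains to show it is onto, i.e. that every $\Psi\in Aut(\Ab)$ is of this form (away from type $C_3$).

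For \textbf{the hard direction} I would work entirely with the edge-labelled Hasse diagram $H_\Ab$ of $\Waab$ and exploit its rigidity. Since $\Ab$ is graded by length (the rank function of the weak order on $\Waab$), any $\Psi\in Aut(\Ab)$ preserves length; in particular it fixes the minimum $e$ and, by Remark \ref{51}, the unique atom $s_0$, hence the bottom edge of label $0$. The core step is to extract from $\Psi$ a permutation $\sigma$ of the label set $\Pi'$: because by Corollary \ref{bendef} the multiset of labels of any $w\in\Waab$ is independent of the chosen reduced word, $\Psi$ acts coherently on labels locally, and I would propagate this coherence upward from $s_0$ through the diagram. The decisive tool is Remark \ref{previous}: a diamond in $H_\Ab$ occurs exactly when two labels $i,j$ commute (i.e. $\a_i\perp\a_j$), and $\Psi$ carries diamonds to diamonds; this lets me transport the relabelling across the connected diagram and show that a single permutation $\sigma$ of $\Pi'$ records the action of $\Psi$ on every edge.

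The next step is to verify that $\sigma$ preserves the Cartan matrix. Adjacency versus orthogonality is read off from diamonds as above; the remaining data — single versus multiple bonds and their orientation — I would recover from Lemma \ref{fund}: a substring $s_\a s_\beta s_\a$ is permitted in $\Waab$ \emph{only} for $\a$ long and $\beta$ short, so the local shape of $H_\Ab$ around an adjacent pair of labels distinguishes a simple bond from a double or triple bond and fixes its direction. Since $\Psi$ preserves these local shapes, $\sigma$ must preserve bond type and orientation, hence $a_{ij}=a_{\sigma(i)\sigma(j)}$; as $\sigma(0)=0$, away from type $C$ (where $\Pi'=\Pi$, so $\Pi'\cup\{0\}=\Pia$) this places $\sigma$ in $Aut(\Pia)$ and, by \eqref{piislinear}, in $Aut(\Pi)$. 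Finally, by Lemma \ref{posetaction} the automorphism $f_\sigma$ already realizes the relabelling $\sigma$ on reduced words, so $\Psi$ and $f_\sigma$ agree at the bottom and act identically on labels; an induction on length then forces $\Psi=f_\sigma$.

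\textbf{The main obstacle} I expect is twofold. First, establishing that the local, edge-by-edge relabelling really glues into one \emph{global} permutation $\sigma$ of $\Pi'$: this is where connectivity of $H_\Ab$, the diamond dictionary of Remark \ref{previous}, and the content-invariance of Corollary \ref{bendef} must be combined carefully, and where low-rank degeneracies can break the propagation. For this reason I would treat the ranks $\le 4$ by direct inspection of the (small) diagrams, as the introduction advertises, and run the local argument only in rank $\ge 5$. Second, the type $C$ phenomenon that $\Pi'\subsetneq\Pi$ — the long simple reflection never appears — means the labels alone cannot see all of $\Pi$; here one must argue separately that no extra poset symmetry arises, and this is precisely the point at which $C_3$ is genuinely exceptional and must be excluded, its small poset $\Ab$ admitting an automorphism with no diagrammatic origin.
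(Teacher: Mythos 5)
Your easy direction is fine and matches the paper: Lemma \ref{posetaction} (together with Proposition \ref{decaut}) shows that $Aut(\Pi)$ acts faithfully on $\Waab$ by poset automorphisms. The hard direction, however, has a genuine gap exactly at the point you yourself flag as ``the main obstacle'': you never establish that the edge-by-edge relabelling induced by $\Psi$ glues into a single permutation $\sigma$ of $\Pi'$. The diamond dictionary of Remark \ref{previous} only constrains $\Psi$ at nodes admitting two reduced expressions; it says nothing at a branch point of $H_\Ab$ that is not the top of a diamond, i.e.\ where a chain splits into two covers $vs_i$ and $vs_{i'}$ that do not close up one step higher. At such a point $\Psi$ may swap the two outgoing edges, and neither Corollary \ref{bendef} nor connectivity of $H_\Ab$ forces that swap to be consistent with the relabelling seen elsewhere. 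This is precisely where the paper spends all of its effort: Lemma \ref{prell} shows, by a case analysis on the minimal distance $m$ from a node to a diamond below it (running through types $\widehat A_3$, $\widehat C_2$, $D_4$, $\widehat B_n$, $\widehat D_n$, $\widehat F_4$), that an automorphism fixing $\Waab_{h-1}$ must fix every element of $\Waab_h$ having two reduced expressions; Lemma \ref{hsigma} then classifies, type by type, which swaps at the first free branch point can occur, checking in each case that the putative image $s_0s_{i_1}\cdots s_{i'}$ either fails to be minuscule (because $N(w)$ would contain both $\gamma$ and $\delta+\gamma$ for some $\gamma$) or is realized by a diagram automorphism. None of this follows from ``diamonds go to diamonds''.

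Two further points. First, your claim that bond multiplicity and orientation can be read off from the local shape of $H_\Ab$ via Lemma \ref{fund} uses only a necessary condition: the lemma forbids certain braids, but you would still have to show that the permitted pattern $s_\a s_\beta s_\a$ ($\a$ long, $\beta$ short) actually occurs in some minuscule element; and in type $C$ the long simple reflection never appears, so the labels genuinely cannot see all of $\Pi$ --- your plan to ``argue separately'' there is exactly the content of the $\widehat C_n$ cases of Lemma \ref{hsigma}. Second, the paper's induction is organized differently in a way that matters: rather than producing one global $\sigma$ at the outset, it proves by induction on $h$ that $\sigma_{|\Waab_h}$ agrees with \emph{some} diagram automorphism (a priori varying with $h$), composing with its inverse at each step; this sidesteps the well-definedness problem your global-$\sigma$ strategy runs into. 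As written, your proposal is a plausible outline of the same circle of ideas, but the theorem is not proved until the propagation and branch-point analysis is actually carried out.
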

Type $C_3$ is dealt with in \cite{S}: the picture at p. 213 shows that  $Aut(\Ab)\cong\ganz_2$; on the other hand $Aut(\Pi)$ is trivial. We exclude this case from now on.\par Before tackling  the proof of the Theorem, we single out the  low rank cases $C_2, A_3$, which will be referred to in the following. \vskip5pt
\begin{equation}\label{f2}
{\xymatrix@R+2pt@C+1pt{ 
\\
{\bullet}\ar@{-}[d]_0\\  
{\bullet}\ar@{-}[d]_1\\ 
{\bullet}\ar@{-}[d]_0\\  
{e}\\ 
{\text{Type}\ C_2}\\
}
}\qquad\qquad\quad{
\xymatrix@R+2pt@C+1pt{ &&{\bullet}\ar@{-}[d]_{0}&\\  
{\bullet}\ar@{-}[dr]_{2}&&{\bullet}\ar@{-}[dr]^{1}\ar@{-}[dl]_{3}&&{\bullet}\ar@{-}[dl]^{2}\\ 
&{\bullet}\ar@{-}[dr]_{1}&&{\bullet}\ar@{-}[dl]^{3}&\\  
&&{\bullet}\ar@{-}[d]_{0}&\\ 
&&{e}&\\
& &{\text{Type}\ A_3}\\
}}
\end{equation}
\vskip10pt

\par
Let $\Waab'_h$ be the subset of $\Waab_{h}$ consisting of elements which have at least two reduced expressions.
\begin{lemma}\label{prell} Let $\s\in Aut(\Ab)$ be such that $\s_{|\Waab_{h-1}}=Id$.  Then $\s(w)=w$ for any $w\in\Waab'_h$.\end{lemma}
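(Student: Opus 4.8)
The plan is to combine the diamond structure from Remark \ref{previous} with the characterization of weak order by inversion sets recorded in Remark \ref{posetiso}. Since $w\in\Waab'_h$ has at least two reduced expressions, Remark \ref{previous} places $w$ at the top of a diamond in $H_{\Ab}$: $w$ has two \emph{distinct} lower covers $ws_i$ and $ws_j$ in $\Waab$ (with $i\neq j$ and, in fact, $s_is_j=s_js_i$), both of length $h-1$, with common bottom $ws_is_j=ws_js_i$. In particular $ws_i,ws_j\in\Waab_{h-1}$, so the hypothesis on $\s$ gives $\s(ws_i)=ws_i$ and $\s(ws_j)=ws_j$.

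Next I would use that $\s$, being an automorphism of the poset $\Ab$, carries covering relations to covering relations, and that the edges of $H_{\Ab}$ are length-$1$ steps $u\to us_k$ (this is how the labeling was defined). Hence $\s(w)$ covers both $\s(ws_i)=ws_i$ and $\s(ws_j)=ws_j$; in particular $\ell(\s(w))=\ell(ws_i)+1=h$, so $N(\s(w))$ has exactly $h$ elements.

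The key step is then a short inversion-set count, which amounts to showing that $w$ is the \emph{unique} common upper cover of $ws_i$ and $ws_j$. Since $ws_i<w$ and $ws_j<w$ are covers, $N(ws_i)$ and $N(ws_j)$ are subsets of $N(w)$ of cardinality $h-1$, and they are distinct because $ws_i\neq ws_j$ and $w\mapsto N(w)$ is injective on $\Wa$. Two distinct $(h-1)$-element subsets of the $h$-element set $N(w)$ have union equal to $N(w)$. On the other hand, $ws_i<\s(w)$ and $ws_j<\s(w)$ give, by Remark \ref{posetiso}, $N(ws_i)\subseteq N(\s(w))$ and $N(ws_j)\subseteq N(\s(w))$, whence $N(w)=N(ws_i)\cup N(ws_j)\subseteq N(\s(w))$. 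Comparing cardinalities, $|N(\s(w))|=h=|N(w)|$ forces $N(\s(w))=N(w)$, and injectivity of $N$ yields $\s(w)=w$.

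The only genuinely delicate point is guaranteeing $\ell(\s(w))=h$, i.e.\ ruling out that $\s$ sends $w$ to a common upper \emph{bound} of $ws_i,ws_j$ of length larger than $h$; this is exactly what cover-preservation of a poset automorphism, together with the length-$1$ nature of the edges of $H_{\Ab}$, provides. Everything else is the above counting argument, so I do not anticipate further obstacles beyond correctly invoking Remark \ref{previous} to produce two genuinely distinct lower covers of $w$.
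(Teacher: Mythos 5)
Your argument rests on a misreading of Remark \ref{previous}, and as a result it only proves a special case of the lemma. That remark asserts that if $w$ has two reduced expressions then a diamond appears \emph{somewhere in the order ideal generated by $w$}, not that $w$ itself is the apex of a diamond. The two statements are genuinely different: by Corollary \ref{bendef} the reduced expressions of $w\in\Waab$ differ only by commutations of adjacent commuting letters, so if every reduced expression of $w$ ends in the same simple reflection then $w$ has a \emph{unique} lower cover in $H_\Ab$, even though it lies in $\Waab'_h$ (the commutation then happens further inside the word, and the diamond sits strictly below $w$). A concrete instance is already visible in the $A_3$ picture of \eqref{f2}: the top element $s_0s_1s_3s_0=s_0s_3s_1s_0$ has two reduced expressions but a single lower cover $s_0s_1s_3$, which is the apex of the diamond one step further down. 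For such $w$ your counting argument collapses: you only know that $\s(w)$ covers the one element $v_1=ws_{i}$, and $N(v_1)\subseteq N(\s(w))$ together with $|N(\s(w))|=h$ does not determine $\s(w)$, since $v_1$ may a priori have several upper covers.

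The case you do treat --- $w$ with two distinct lower covers, which is the case $m=0$ of the paper's proof --- is handled correctly, and your inversion-set count ($N(ws_i)\cup N(ws_j)=N(w)\subseteq N(\s(w))$, then compare cardinalities and use injectivity of $N$) is a clean alternative to the paper's label-matching via Remark \ref{previous}. But the bulk of the paper's proof is devoted precisely to the situation you omit: it chooses the minimal $m$ for which a chain $w\to v_1\to\dots\to v_m$ ends at the apex $v_m$ of a diamond, and for $m\ge 1$ it argues, case by case on $m$ and on the local shape of the affine Dynkin diagram (using the forbidden-braid Lemma \ref{fund}, and direct inspection in a few configurations such as $\widehat A_3$, $\widehat C_2$, $D_4$, $\widehat B_n$, $\widehat D_n$, $\widehat F_4$), that the label $i'$ of the edge from $\s(w)$ down to $v_1$ must coincide with the label of the edge from $w$, forcing $\s(w)=w$. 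None of that is recoverable from your argument, so the proposal has a genuine gap.
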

\begin{proof} 
Since $\Waab'_1=\emptyset$, we can clearly assume $h>1$. 
 Let $w$ be a node in $\Waab'_{h}$. By the very definition of $\Waab'_{h}$, the order ideal generated by $w$ contains a subdiagram of the form 
$$
\xymatrix@R+2pt@C+1pt{ &{w}\ar@{-}[d]_{i_1}&\\
&{v_1}\ar@{.}[d]&\\ 
&{v_{m-1}}\ar@{-}[d]_{i_m}&\\  
&{v_m}\ar@{-}[dr]^{r}\ar@{-}[dl]_{t}&\\ 
{v'}\ar@{-}[dr]_{r}&&{v''}\ar@{-}[dl]^{t}\\  
&{v'''}&\\ 
}
$$
Choose a subdiagram with minimum $m$. If $m=0$,  we have a diamond
$$
\xymatrix@R+2pt@C+1pt{ 
&&&{w}\ar@{-}[dr]^j\ar@{-}[dl]_i&&&\\ 
&&{w'}\ar@{-}[dr]_j&&{w''}\ar@{-}[dl]^i&\\  
&&&{w'''}&&\\ 
}
$$
This is mapped by $\s$ into a diamond 
$$
\xymatrix@R+2pt@C+1pt{ 
&{v}\ar@{-}[dr]^{j'}\ar@{-}[dl]_{i'}&\\ 
{w'}\ar@{-}[dr]_{j}&&{w''}\ar@{-}[dl]^{i}\\  
&{w'''}&\\ 
}
$$
By Remark  \ref{previous}, we have $i'=i$ and $j'=j$. It follows that $v=\s(w)=w's_{i}=w$.

Assume now that $m=1$, so that  $$
\xymatrix@R+2pt@C+1pt{ &{w}\ar@{-}[d]_i&\\  
&{v}\ar@{-}[dr]^{r}\ar@{-}[dl]_{t}&\\ 
{v'}\ar@{-}[dr]_{r}&&{v''}\ar@{-}[dl]^{t}\\  
&{v'''}&\\ 
}
$$
with $s_i s_{r} \ne s_{r} s_{i}$, and $s_i s_{t} \ne s_{t} s_{i}$. Thus $\a_i,\a_r,\a_t$ form an irreducible subsystem of $\Da$ of rank $3$ with $\a_r,\a_t$ orthogonal.  Applying $\s$ to the above diagram we have
$$
\xymatrix@R+2pt@C+1pt{ &{\s(w)}\ar@{-}[d]_{i'}&\\  
&{v}\ar@{-}[dr]^{r}\ar@{-}[dl]_{t}&\\ 
{v'}\ar@{-}[dr]_{r}&&{v''}\ar@{-}[dl]^{t}\\  
&{v'''}&\\ 
}
$$
Thus $\a_{i'}$ is not orthogonal to both $\a_t$ and $\a_r$. Except in type $A_3$, this implies that $i'=i$. Indeed both $\a_{i'}$ and $\a_i$ are connected to $\a_t$, $\a_r$ in $\Pia$. Thus, if $i\ne i'$, $\a_t$, $\a_r$, $\a_i$, $\a_{i'}$ form a cycle, hence $\Pia$ is of type $\widehat A_3$, and we are done by looking at \eqref{f2}.
Next we assume $m=2$:
$$
\xymatrix@R+2pt@C+1pt{ &{w}\ar@{-}[d]_i&\\ 
&{u}\ar@{-}[d]_{j}&\\  
&{v}\ar@{-}[dr]^{r}\ar@{-}[dl]_{t}&\\ 
{v'}\ar@{-}[dr]_{r}&&{v''}\ar@{-}[dl]^{t}\\  
&{v'''}&\\ 
}
$$
The automorphism $\s$ maps this configuration to
$$
\xymatrix@R+2pt@C+1pt{ &{\s(w)}\ar@{-}[d]_{i'}&\\ 
&{u}\ar@{-}[d]_{j}&\\  
&{v}\ar@{-}[dr]^{r}\ar@{-}[dl]_{t}&\\ 
{v'}\ar@{-}[dr]_{r}&&{v''}\ar@{-}[dl]^{t}\\  
&{v'''}&\\ 
}
$$
Assume first that $\a_{i}=\a_t$ or $\a_{i}=\a_r$. For simplicity assume $\a_{i}=\a_t$, then $\a_j$ must be short and $\a_{i}$ is long. Thus there are only two roots connected to $\a_j$. Since $\a_{i'}$ is connected to $\a_j$, we must have that either $\a_{i'}=\a_t=\a_i$ or $\a_{i'}=\a_r$. In the latter case $\a_r$ must be long, for, otherwise, $s_rs_js_{i'}=s_rs_js_{r}$  is forbidden  braid (see Lemma \ref{fund}). This implies that we are in type $\widehat C_2$, and we are done again by looking at \eqref{f2}.\par
We can therefore assume that $\a_i\ne \a_t$ and $\a_i\ne \a_r$. Since $\a_j$ is not orthogonal to $\a_i$, $\a_r$, $\a_t$, there  are at least  three vertices stemming from $\a_j$ in $\Pia$. If $i'=t$ or $i'=r$, then the braid $s_rs_js_r$ or $s_ts_js_t$ would occur in a reduced expression for $\s(w)$, which is impossible by Lemma \ref{fund}. This implies that either $i'=i$ or there are four edges stemming from $\a_j$, i. e. we are in type $D_4$. This latter case is handled by a direct inspection: the Hasse diagram for type $D_4$ is given in \cite[p. 217]{S}, and in this case one can check that $i=i'=0$.

We now assume that $m\ge 3$.
First assume $\a_{i_m}$ long and $\a_{i_m}\ne\a_{i_2}$. This implies that the roots $\a_t$, $\a_r$, and $\a_{i_{m-1}}$ are distinct and all connected to $\a_{i_m}$. Thus $\a_{i_m}$ is a vertex of degree at least three in the Dynkin diagram. The automorphism $\s$ maps this configuration to
$$
\xymatrix@R+2pt@C+1pt{ &{\s(w)}\ar@{-}[d]_{i'_1}&\\ 
&{v_1}\ar@{-}[d]_{i_2}&\\
&{v_2}\ar@{.}[d]&\\ 
&{v_{m-1}}\ar@{-}[d]_{i_m}&\\  
&{v_m}\ar@{-}[dr]^{r}\ar@{-}[dl]_{t}&\\ 
{v'}\ar@{-}[dr]_{r}&&{v''}\ar@{-}[dl]^{t}\\  
&{v'''}& 
}
$$
If $i_1\ne i_1'$, then $\a_{i_1}$, $\a_{i_3}$, and $\a_{i'_1}$ are all connected to $\a_{i_2}$. If they are not all distinct then $\a_{i_2}$ is short and $\a_{i_3}$ is long. Since there is a degree three vertex in the diagram, we are in type $\widehat B_n$ and both $\a_{i_1}$ and $\a_{i'_1}$ are connected to the unique short simple root. Hence $i_1=i'_1$ as desired. 
\par
We can therefore assume that  $\a_{i_1}$, $\a_{i_3}$, and $\a_{i_1'}$ are pairwise distinct. It follows that there are at least three vertices stemming from $\a_{i_2}$ in $\Pia$. Recall that we assumed that $\a_{i_m}\ne \a_{i_2}$, so there are two vertices of degree three in the Dynkin diagram. Thus we are in type $\widehat D_{n}$ with $n\ge 5$: indeed we claim that we are in the following situation.
$$
\xymatrix@!=4ex{
 \mathop\circ\limits_{\a_t}\ar@{-}[dr] & & &  & \mathop\circ\limits_{\alpha_{i_1}}\\
&\mathop\circ\limits_{\a_{i_m}} \ar@{-}[r]&\mathop\circ\limits_{\a_{i_{m-1}}} \ar@{-}[r] \dots \ar@{-}[r] &{\mathop\circ\limits_{\a_{i_{2}}}}\ar@{-}[ur]\ar@{-}[dr]\\
 \mathop\circ\limits_{\a_r}\ar@{-}[ur]&& &  & \mathop\circ\limits_{\alpha_{i_1'}}
}
$$
In fact, $\a_{i_{m-1}}$ is connected to $\a_{i_m}$ and cannot be $\a_t$ or $\a_r$ for, in such a case, the braid $s_ts_{i_m}s_t$ or $s_rs_{i_m}s_r$ would occur in a reduced expression for $w$. The same argument shows that $\a_{i_{m-j}}$ is connected to $\a_{i_{m-j+1}}$ and cannot be $\a_{i_{m-j+2}}$. 
We want to prove that, since $w=v'''s_ts_rs_{i_m}\dots s_{i_2}s_{i_1}$ is in $\Waab$, then $w'=v'''s_ts_rs_{i_m}\dots s_{i_2}s_{i_1'}\not\in\Waab$. First observe that $v'''\ne e$. This is so because, by Remark \ref{51}, the Hasse diagram does not start with a diamond.
Let $j$ be the label of an edge reaching $v'''$. We now prove that $j=i_m$. If $j=i_a$ with $1<a<m$,  then the braid $s_{i_a}s_{i_{a+1}}s_{i_a}$ would occur in a reduced expression of $w$. If $j=i_1,i_1'$, then the braids $s_{i_1}s_{i_{2}}s_{i_1}$, $s_{i'_1}s_{i_{2}}s_{i'_1}$ would occur in a reduced expression of $w,w'$ respectively. Since obviously $j\ne r,t$ we have $j=i_m$.
\par
Repeating this argument we find that
$$w=us_{i_2}s_{i_3}\dots s_{i_m}s_rs_ts_{i_m}\dots s_{i_2}s_{i_1},\ w'=us_{i_2}s_{i_3}\dots s_{i_m}s_rs_ts_{i_m}\dots s_{i_2}s_{i'_1}.$$
Note that $u\ne e$, since $s_{i_2}\ne s_0$. Write $u=u's_j$. The above argument shows that $j=i_1$ or $j=i'_1$. If $j={i'_1}$, then 
$s_{i'_1}s_{i_2}s_{i_3}\dots s_{i_m}s_rs_ts_{i_m}\dots s_{i_2}(\a_{i_1})=\d+\a_{i_1'}$, thus both $u'(\a_{i_1'})$ and $\d+u'(\a_{i_1'})$ are in $N(w)$, which is absurd, since $w\in\Waab$. It follows that $j=i_1$, but then 
$u'(\a_{i_1})$ and $\d+u'(\a_{i_1})$ are both in $N(w')$ so $w'\not\in \Waab$.

Assume now that $i_2=i_m$. If $\a_{i_j}$ are all long roots then $i_m\ne i_{m-2}$, otherwise we would have a forbidden braid. So the Dynkin diagram is
$$
\xymatrix@!=4ex{
 \mathop\circ\limits_{\a_t}\ar@{-}[dr] & & &  & \\
&\mathop\circ\limits_{\a_{i_m}} \ar@{-}[r]&\mathop\circ\limits_{\a_{i_{m-1}}} \ar@{-}[r] &{\mathop\circ\limits_{\a_{i_{m-2}}}}&\dots\\
 \mathop\circ\limits_{\a_r}\ar@{-}[ur]&& &  & }
$$
By the same argument $i_{m-3}\ne i_{m-1},i_{m-4}\ne i_{m-2},\dots, i_{4}\ne i_{2}$ so the Dynkin diagram is
$$
\xymatrix@!=4ex{
 \mathop\circ\limits_{\a_t}\ar@{-}[dr] & & &  & \\
&\mathop\circ\limits_{\a_{i_m}} \ar@{-}[r]&\mathop\circ\limits_{\a_{i_{m-1}}} \ar@{-}[r] &{\mathop\circ\limits_{\a_{i_{m-2}}}\dots}&{\mathop\circ\limits_{\a_{i_{2}}}}\ar@{-}[l]&\dots\\
 \mathop\circ\limits_{\a_r}\ar@{-}[ur]&& &  & }
$$
contradicting the hypothesis that $i_m=i_2$. This implies that there is $k$ such that $\a_{i_k}$ is a short root. Thus we are in type $\widehat B_n$. We claim that we are in the following situation:
$$
\xymatrix@!=4ex{
 \mathop\circ\limits_{\a_t}\ar@{-}[dr] & & &  &\\
&\mathop\circ\limits_{\a_{i_m}=\a_{i_2}} \ar@{-}[rr]&&\mathop\circ\limits_{\a_{i_{m-1}}=\a_{i_3}} \ar@{-}[r] \dots\dots  &&{\mathop\circ\limits_{\a_{i_{k-1}}=\a_{i_{k+1}}}}\ar@{=>}[rr]\ar@{-}[ll]&&{\mathop\circ\limits_{\a_{i_{k}}}}\\
 \mathop\circ\limits_{\a_r}\ar@{-}[ur]&& &  }
$$
Indeed, let $i_k$ be the first occurrence of the short simple root. It follows that, since $\a_{i_{k-1}}$ is connected to $\a_{i_k}$, we have that $i_{k-1}=i_{k+1}$. Since $\a_{i_{k+2}}$ is connected to $\a_{i_{k+1}}=\a_{i_{k-1}}$ and the braid $s_{i_k}s_{i_{k+1}}s_{i_k}$ is forbidden, we see that $i_{k-2}=i_{k+2}$. The same argument shows that $i_{k+j}=i_{k-j}$ for $j=0,\dots,k-1$.
Thus $w=v'''s_rs_ts_{i_2}s_{i_3}\dots s_{i_k}\dots s_{i_2}s_{i_1}$ and $i_1=r$ or $i_1=t$. Assume for simplicity $i_1=r$. Then both $v'''(\a_t)$ and $\d+v'''(\a_t)$ are in $N(w)$ and this is impossible.

Assume now $\a_{i_m}$ short. Note that $\a_{i_{m-1}}$, $\a_r$, $\a_t$ cannot be pairwise distinct for, otherwise, $\a_{i_{m}}$ would be a node of degree three in a non simple laced  Dynkin diagram, hence $\a_{i_m}$ would be long. It follows that $\a_{i_{m-1}}=\a_t$ or $\a_{i_{m-1}}=\a_r$. Assume for simplicity that $\a_{i_{m-1}}=\a_t$. Since $\a_r$ is orthogonal to $\a_{t}$ we have the following situation
$$
\xymatrix@!=4ex{
{\mathop\circ\limits_{\a_t}}\ar@{=>}[r]&{\mathop\circ\limits_{\a_{i_{m}}}}\ar@{-}[r]&{\mathop\circ\limits_{\a_r}
} }
$$

Now $\a_{i_{m-2}}$  is connected to $\a_{i_{m-1}}$ and it is not $\a_r$ for, otherwise, there would be a forbidden braid. It follows that we are in type $\widehat F_4$ and $t=i_{m-1}=2$, $i_{m-2}=1$, $i_m=3$, and $r=4$. This does not happen, as a direct inspection of the Hasse diagram shows\footnote{Indeed an argument avoiding the inspection could be provided, but looking at the Hasse diagram is certainly handier in this case.} (see \cite[p. 218]{S}).
\end{proof}

If $\s\in Aut(\Ab)$, let $h_\s$ be the maximal $h\in\nat$ such that $\s_{|{\Waab_{h}}}=Id$. 
If $\s=Id$ then we set $\Waab_{h_\s+1}=\Waab_{h_\s}=\Waab$.

Lemma \ref{posetaction} implies clearly that $Aut(\Pi)$ acts by poset automorphisms. Note also that $f\in Aut(\Pi)$ acts on $H_\Ab$ as an automorphism of labelled graphs, and that the induced map on labels is $f$ itself restricted to $\Pi'$. This fact will be used without comment in the proof of the following result.

\begin{lemma}\label{hsigma}
If $\s\in Aut(\Ab)$ then $\s_{|\Waab_{h_\s+1}}\in Aut(\Pi)_{|\Waab_{h_\s+1}}$.
\end{lemma}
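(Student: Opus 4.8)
The plan is to use the grading of $\Waab$ by length together with Lemma~\ref{prell} to pin down exactly which elements of length $h_\s+1$ can be moved by $\s$, and then to read off a diagram automorphism from the way $\s$ permutes the corresponding edge–labels. First I would record that, since $\Waab$ is graded by $\ell$ and $\s$ is a poset automorphism, $\s$ preserves length. By Remark~\ref{51} the only element of length $1$ is $s_0$, which $\s$ must fix, so $h_\s\ge 1$; and by maximality of $h_\s$ there is at least one $w\in\Waab$ of length $h_\s+1$ with $\s(w)\ne w$. By Lemma~\ref{prell} every such moved $w$ lies outside $\Waab'_{h_\s+1}$, i.e. $w$ has a \emph{unique} reduced expression; hence the maximal chains below $w$ are unique, so $w$ covers a single element $v$, necessarily of length $h_\s$ and therefore fixed by $\s$. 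Writing $w=vs_i$, the element $\s(w)$ again covers $v=\s(v)$, has length $h_\s+1$, and (being the image of a thin element) is itself thin, so $\s(w)=vs_{i'}$ for a unique $i'\ne i$ with $vs_{i'}\in\Waab$.

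Next I would analyse the local picture at $v$. Since both $vs_i$ and $vs_{i'}$ are thin and cover only $v$, the last letter $s_a$ of the (unique) reduced word of $v$ must commute with neither $s_i$ nor $s_{i'}$: if $s_a$ commuted with $s_i$, then $vs_i$ would acquire a second reduced expression by the swap, contradicting thinness via Corollary~\ref{bendef}. Thus $\a_a$ is joined to both $\a_i$ and $\a_{i'}$ in $\Pia$; combining this with the ban on the braids of Lemma~\ref{fund}, I would show that $\a_i$ and $\a_{i'}$ occupy symmetric positions of the affine Dynkin diagram relative to $\supp(v)$, so that the transposition $i\leftrightarrow i'$, together with the identity on all other labels, preserves the Cartan integers $a_{jk}$. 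Running this over every fixed $v$ of length $h_\s$ records, at each such $v$, the permutation $\pi_v$ of the up–labels induced by $\s$ (which by Lemma~\ref{prell} fixes every label carrying a thick edge). I then assemble a single permutation $f$ of the labels whose restriction to the up–labels at each $v$ is $\pi_v$. The delicate point, precisely as in Lemma~\ref{prell}, is to verify that the swaps forced at different base points are mutually compatible and globally respect adjacency in $\Pia$, with the non–simply–laced cases and the low–rank coincidences (types $\widehat B$, $\widehat C$, $\widehat F_4$, $D_4$, $A_3$) eliminated by the same local inspections used there. This produces $f\in Aut(\Pia)$.

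Finally I would verify that $f$ has the two required properties. A key intermediate claim is that a moved label is \emph{fresh}: if $f(i)\ne i$, then $s_i$ occurs in no reduced word of an element of $\Waab_{h_\s}$ (and likewise $f^{-1}(i)$). Granting this, the label $0$ is never moved, since $s_0\in\Waab_{h_\s}$ as $h_\s\ge1$; hence $f(\a_0)=\a_0$, and \eqref{piislinear} gives $f\in Aut(\Pi)$. Moreover, because $f$ acts on $\Waab$ by relabelling reduced words (Lemma~\ref{posetaction}) and the moved labels avoid $\Waab_{h_\s}$ entirely, $f$ fixes $\Waab_{h_\s}$ pointwise, matching $\s$ there. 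On level $h_\s+1$ the thick and fixed thin nodes are fixed by $f$ (their up–labels satisfy $f(j)=j$ by construction), while on a moved node the construction gives $\s(vs_i)=vs_{i'}=vs_{f(i)}=w_{f\cdot\i}$ by Lemma~\ref{posetaction}; thus $\s$ and $f$ agree on all of $\Waab_{h_\s+1}$, giving $\s_{|\Waab_{h_\s+1}}\in Aut(\Pi)_{|\Waab_{h_\s+1}}$. The main obstacle is the middle step: transferring the single–node local data into one globally consistent diagram automorphism and simultaneously establishing the freshness of the moved labels, which is essentially a reprise of the affine–diagram case analysis of Lemma~\ref{prell}.
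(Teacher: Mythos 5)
Your opening reductions are correct and coincide with the paper's: $h_\s\ge 1$ by Remark \ref{51}, any moved $w$ of length $h_\s+1$ has a unique reduced expression by Lemma \ref{prell}, hence $[e,w]$ is a chain, $w=vs_i$ and $\s(w)=vs_{i'}$ cover the same fixed $v$, and the last letter $\a_{i_1}$ of $v$ must be adjacent in $\Pia$ to both $\a_i$ and $\a_{i'}$. But from that point on the proposal has a genuine gap: everything that actually proves the lemma is deferred to ``I would show'' and ``essentially a reprise of Lemma \ref{prell}''. The substance of the paper's proof is precisely the case analysis you skip: splitting on whether $\a_i,\a_{i'},\a_{i_2}$ are pairwise distinct, on whether some $\a_{i_j}$ is short, and then treating $\widehat A_n$, $\widehat C_n$, $\widehat B_n$, $\widehat D_n$ ($h_\s=2$ versus $h_\s=n-2$), $\widehat E_{6,7,8}$, $\widehat F_4$ separately. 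Crucially, in most of these branches the outcome is \emph{not} that a diagram automorphism realizes the swap, but that the configuration is impossible --- e.g.\ $\s(w)\notin\Waab$ because $N(\s(w))$ would contain both $\gamma$ and $\d+\gamma$, or the explicit Hasse diagram of $\Waab_{h_\s+1}$ forces $\s$ to fix level $h_\s+1$ after all, contradicting the definition of $h_\s$. These contradiction arguments use the fine structure of $\Waab$ (forbidden braids, membership of $\d+\a_0$ in $N(w)$, low-rank Hasse diagrams), not merely ``local inspections'' of $\Pia$, and your framework of assembling local permutations $\pi_v$ into a global $f$ does not anticipate them.

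Moreover, one of your asserted intermediate steps is false as stated: the claim that ``the transposition $i\leftrightarrow i'$, together with the identity on all other labels, preserves the Cartan integers.'' In the cases where a nontrivial $\s$ actually survives this is wrong. In type $A_n$ ($n\ge 4$) one has $h_\s=1$, $\{i,i'\}=\{1,n\}$, and the bare transposition $(1\,n)$ is not an automorphism of $\Pia$ (it destroys the adjacency $\a_1$--$\a_2$); the correct object is the full flip $\a_j\mapsto\a_{n+1-j}$. Likewise in $E_6$ the swap $\a_3\leftrightarrow\a_5$ must be accompanied by $\a_1\leftrightarrow\a_6$. What is true, and what the paper proves, is that there exists some $\s'\in Aut(\Pi)$ (generally moving labels other than $i,i'$) whose restriction to the labels actually occurring in $\Waab_{h_\s+1}$ induces the observed swap --- and this requires checking that the extra labels moved by $\s'$ do not occur in reduced words of elements of $\Waab_{h_\s+1}$ (your ``freshness'' claim, which is also left unproved and is itself part of the case analysis, e.g.\ the argument in type $\widehat D_n$ that only $w$ and $w'$ in $\Waab_{n-1}$ involve $s_{n-1}$ or $s_n$). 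So your assembly step, as written, would produce maps that are not diagram automorphisms; the repair is exactly the type-by-type verification you have omitted.
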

\begin{proof}
Since the poset starts as in \eqref{f1}, we have that  $h_\s\ge1$. Clearly we can assume $\s\ne Id$.
Let  $w\in\Waab_{h_\s+1}$ be such that  $\s(w)\ne w$. Then, by Lemma \ref{prell}, we have that  $w\notin\Waab'_{h_\s+1}$. Hence the interval from $e$ to $w$ is 
 a chain:
$$
\xymatrix@R+2pt@C+1pt{ 
{w}\ar@{-}[d]_{i}\\ 
{v_1}\ar@{-}[d]_{i_1}\\
{v_2}\ar@{.}[d]\\ 
{v_{h_\s}=s_0}\ar@{-}[d]_{i_{h_\s}=0}\\
{e}  
}
$$
Since $w\in \Waab_{h_\s+1}$, the automorphism $\s$ maps the above chain to
$$
\xymatrix@R+2pt@C+1pt{ 
{\s(w)}\ar@{-}[d]_{i'}\\ 
{v_1}\ar@{-}[d]_{i_1}\\
{v_2}\ar@{.}[d]\\ 
{v_{h_\s}=s_0}\ar@{-}[d]_{i_{h_\s}=0}\\
{e}  
}
$$
with $i\ne i'$. 

Let us discuss the case $h_\s=1$. If $h_\s=1$, then $w=s_0s_i$. If $\s(w)=s_0s_{i'}$ with $i\ne i'$  then there are two simple roots connected to $\a_0$. This happens only in type $\widehat A_n$ with ${i,i'}={1,n}$. In this case $\Waab_2$ is 
$$
\xymatrix@R+2pt@C+1pt{ 
{\bullet}\ar@{-}[dr]_{1}&&{\bullet}\ar@{-}[dl]^{n}\\ 
&{\bullet}\ar@{-}[d]_{0}&\\
&{e}&  
}
$$
and $Aut(\Waab_2)=Aut(\Pi)_{|\Waab_2}$.
We can therefore assume that $h_\s\ge2$.

Assume first that $\a_{i'}$, $\a_{i_2}$, $\a_i$ are not pairwise distinct. For simplicity assume $\a_i=\a_{i_2}$. This implies that  $\a_{i_1}$ is short and $\a_i$ is long for, otherwise, we would have a forbidden braid. If $\a_{i'}$ is also long, then we are in type $\widehat C_2$. By looking at \eqref{f2}, we see that, in this case, $Aut(\Ab)=\{Id\}$. We therefore have that $\a_{i'}$ short. If $h_\s=2$ (so that $i=i_2=0$), then we are in type $\widehat C_n$ with $n\ge 4$. (Recall that we are excluding type $\widehat C_3$). In this case the Hasse diagram of $\Waab_4$ is 
$$
\xymatrix@R+2pt@C+1pt{ 
{\bullet}\ar@{-}[dr]_{3}&&{\bullet}\ar@{-}[dl]_{0}\ar@{-}[dr]^{2}\\ 
&{\bullet}\ar@{-}[dr]_{2}&&{\bullet}\ar@{-}[dl]^{0}\\
&&{\bullet}\ar@{-}[d]_1\\ 
&&{\bullet}\ar@{-}[d]_{0}\\
&&{e}  
}
$$
From this graph we see that $\s_{|\Waab_3}=Id$. Thus $h_\s>2$. Since $i_3\ne i_1$, we see that we are in type $\widehat F_4$, $h_\s=4$, and  $w=s_0s_1s_2s_3s_2$, but then $w\not\in\Waab$.

We can therefore assume that
$\a_{i}$, $\a_{i'}$, and $\a_{i_2}$ are pairwise distinct.  Thus $\a_{i_1}$ is a node of degree at least three in the Dynkin diagram. In particular $\a_{i_1}$ is a long root.

Assume that  there is $j>1$ such that $\a_{i_j}$ is short. Since there is a triple node and the diagram is not simply laced, we are  in type $\widehat B_n$: indeed we claim that we are in the following situation
$$
\xymatrix@!=4ex{
 \mathop\circ\limits_{\a_i}\ar@{-}[dr] & & &  &\\
&\mathop\circ\limits_{\a_{i_{h-1}}=\a_{i_1}} \ar@{-}[rr]&&\mathop\circ\limits_{\a_{i_{h-2}}=\a_{i_2}} \ar@{-}[r] \dots\dots  &&{\mathop\circ\limits_{\a_{i_{k-1}}=\a_{i_{k+1}}}}\ar@{=>}[rr]\ar@{-}[ll]&&{\mathop\circ\limits_{\a_{i_{k}}}}\\
 \mathop\circ\limits_{\a_{i'}}\ar@{-}[ur]&& &  }
$$
and $i=i_h=0$ or $i'=i_h=0$. Indeed, let $i_k$ be the first occurrence of the short simple root. It follows: since $\a_{i_{k-1}}$ is connected to $\a_{i_k}$, then $i_{k-1}=i_{k+1}$. Since $\a_{i_{k+2}}$ is connected to $\a_{i_{k+1}}=\a_{i_{k-1}}$ and the braid $s_{i_k}s_{i_{k+1}}s_{i_k}$ is forbidden, we see that $i_{k-2}=i_{k+2}$. The same argument shows that $i_{k+j}=i_{k-j}$ for $j=0,\dots,k-1$.
Thus $w=s_0s_{i_1}s_{i_2}\dots s_{i_k}\dots s_{i_1}s_0$ or $w=s_0s_{i_1}s_{i_2}\dots s_{i_k}\dots s_{i_1}s_1$. In the second case we have that   $\d+\a_0$ is in $N(w)$ and this is impossible.
Thus $i=0$ and $i'=1$. But then  we have $\s(w)=s_0s_{i_1}s_{i_2}\dots s_{i_k}\dots s_{i_1}s_1\not\in \Waab$. 

 We can therefore deduce that all
the roots $\a_{i_j}$ are long roots.  This implies that $i_j\ne i_{j-2}$ for all $j>2$, otherwise we would have forbidden braids. So $\{\a_i,\a_{i'}\}\cup\{\a_{i_j}\mid j=1,\dots,h\}$ 
 form a subdiagram of type
$$
\xymatrix@!=4ex{
 \mathop\circ\limits_{\a_i}\ar@{.}[dr] & & &   \\
&\mathop\circ\limits_{\a_{i_1}} \ar@{-}[r]&\mathop\circ\limits_{\a_{i_{2}}} \ar@{-}[r] &{\mathop\circ\limits_{\a_{i_{3}}}\dots}&{\mathop\circ\limits_{\a_{0}=\a_{i_h}}.}\ar@{-}[l]\\
 \mathop\circ\limits_{\a_{i'}}\ar@{.}[ur]&& &   }
$$
The dotted edges may be multiple. This is possible in types $\widehat B_n (n\ge 3), \widehat D_n (n\ge 4),  \widehat E_n, (n=6,7,8).$\par
{\bf Case 1: type $\widehat B_n$}. Then $h_\s=2$ and $\{i,i'\}=\{1,3\}$. The Hasse graph of $\Waab_4$ is 
$$
\xymatrix@R+2pt@C+1pt{ 
{\bullet}\ar@{-}[dr]_{4}&&{\bullet}\ar@{-}[dl]_{1}\ar@{-}[dr]^{3}\\ 
&{\bullet}\ar@{-}[dr]_{3}&&{\bullet}\ar@{-}[dl]^{1}\\
&&{\bullet}\ar@{-}[d]_2\\ 
&&{\bullet}\ar@{-}[d]_{0}\\
&&{e}  
}
$$
if $n\ge4$ (if $n=3$ just replace the label $4$ by $2$).
%
%
%
%
From these graphs we see that $\s_{|\Waab_3}=Id$.

{\bf Case 2: type $\widehat D_n$.} Either $h_\s=2$ or  $h_\s=n-2$.

If $h_\s=2$ then $w=s_0s_2s_i$ with $\a_i$ that ranges over the nodes connected to $\a_{2}$ and different from $\a_0$.
If $n=4$ we are done because
$\Waab_3$ is 
$$
\xymatrix@R+2pt@C+1pt{ 
{\bullet}\ar@{-}[dr]_{1}&{\bullet}\ar@{-}[d]^{3}&{\bullet}\ar@{-}[dl]^{4}\\ 
&{\bullet}\ar@{-}[d]_{2}&\\
&{\bullet}\ar@{-}[d]_{0}&\\
&{e}&  
}
$$
and $Aut(\Waab_3)=Aut(\Pi)_{|\Waab_3}$. 

If $h_\s=2$ and $n>4$, then 
 the Hasse graph of $\Waab_4$ is 
$$
\xymatrix@R+2pt@C+1pt{ 
{\bullet}\ar@{-}[dr]_{4}&&{\bullet}\ar@{-}[dl]_{1}\ar@{-}[dr]^{3}\\ 
&{\bullet}\ar@{-}[dr]_{3}&&{\bullet}\ar@{-}[dl]^{1}\\
&&{\bullet}\ar@{-}[d]_2\\ 
&&{\bullet}\ar@{-}[d]_{0}\\
&&{e}  
}
$$
hence $Aut(\Ab)_{|\Waab_3}=\{Id\}$ so this case does not occur.

If $h_\s=n-2$, we may assume $n\ge 5$ and, by our analysis, there are only two elements that can be moved by $\s$:  these are $w=s_0s_{2}\dots s_{n-2} s_{n-1}$ and $w'=s_0s_{2}\dots s_{n-2} s_{n}$, thus $\s$ must exchange them and fix all other elements of $\Waab_{n-1}$. We need to check that $\s_{|\Waab_{n-1}}\in Aut(\Pi)_{|\Waab_{n-1}}$. To this end, it is enough to show that the only elements of  $\Waab_{n-1}$ containing $s_n$ or $s_{n-1}$ in a reduced expression   are $w$ and $w'$. This clearly concludes the proof in this case, for, then, $\s_{|\Waab_{n-1}}= \s'_{|\Waab_{n-1}}$ with $\s'\in Aut(\Pi)$ exchanging $\a_{n-1}$ and $\a_n$. 
Let $v\in\Waab_{n-1}$ contain $s_{n-1}$. Observe that any reduced expression of $v$ starts with $s_0s_2$. Also, the simple reflections $s_3,s_4,\ldots,s_{n-1}$ have to appear, and to appear exactly in this 
order. Otherwise,  let $i$ be the place where  the first violation occurs: then $v=s_0 s_2\cdots s_{i-1}s_a u,a\ne i.$ If $a>i$, then  $v=s_a s_0 s_2\cdot s_{i-1}u\notin \Waab$. If $1<a<i$ then we can move $s_a$ to its left until we form a forbidden braid. Finally if $a=1$, then $v=s_0 s_2 s_1 z$; repeating the above argument we see that $z=s_3s_4\cdots s_{n-1}$. But then $\ell(v)=n$, against our assumption.

{\bf Case 3: type $\widehat E_n$ ($n=6,7,8$).} In these cases $h_\s=n-3$.
In type $\widehat E_6$, $\Waab_4$ has Hasse diagram
$$
\xymatrix@R+2pt@C+1pt{ 
{\bullet}\ar@{-}[dr]_{3}&&{\bullet}\ar@{-}[dl]^{5}\\
&{\bullet}\ar@{-}[d]_4\\ 
&{\bullet}\ar@{-}[d]_{2}\\
&{\bullet}\ar@{-}[d]_{0}\\
&{e}  
}
$$
so, if $\s_{|\Waab_4}\ne Id$, then $\s_{|\Waab_4}= \s'_{|\Waab_4}$ with $\s'\in Aut(\Pi)$ exchanging $\a_3$ and $\a_5$.

In the other cases, we see that the Hasse diagram of $\Waab_{n-1}$ is
$$
\xymatrix@R+2pt@C+1pt{ 
{\bullet}\ar@{-}[dr]_{n-1}&&{\bullet}\ar@{-}[dl]_{2}\ar@{-}[dr]^{n-2}\\ 
&{\bullet}\ar@{-}[dr]_{n-2}&&{\bullet}\ar@{-}[dl]^{2}\\
&&{\bullet}\ar@{-}[d]_{n-3}\\
&&{\bullet}\ar@{.}[d]\\
&&{\bullet}\ar@{-}[d]_3\\ 
&&{\bullet}\ar@{-}[d]_1\\ 
&&{\bullet}\ar@{-}[d]_{0}\\
&&{e}  
}
$$
From this graph we see that $\s_{|\Waab_{n-2}}=Id$. Thus, in this cases, $Aut(\Ab)=\{Id\}$ and there is nothing to prove.
\end{proof}

\begin{rem}
Note that the proof of Lemma \ref{hsigma} shows also that, if $\s\ne Id$, then $h_\s$ does not depend on $\s$.
\end{rem}

We are now ready to prove our main Theorem:

\begin{proof}[Proof of Theorem \ref{T}] We will prove by induction on $h$ that, given $\s\in Aut(\Ab)$,  $\s_{|\Waab_h}\in Aut(\Pi)_{|\Waab_h}$ for any $h\ge0$. If $h=0$, there is nothing to prove. Assume $h>0$. Then, by the induction hypothesis,  there is $\tilde\s\in Aut(\Pi)$ such that $\s_{|{\Waab_{h-1}}}=\tilde\s_{|{\Waab_{h-1}}}$. Set $\tau=\s\tilde\s^{-1}$. By Lemma \ref{hsigma}, there  is $\s'\in Aut(\Pi)$ such that $\tau_{|\Waab_{h_{\tau}+1}}=\s'_{|\Waab_{h_{\tau}+1}}$. Clearly $h_\tau\ge h-1$, so $\tau_{|\Waab_{h}}=\s'_{|\Waab_{h}}$, i.e. $\s\tilde\s^{-1}_{|\Waab_{h}}=\s'_{|\Waab_{h}}$, so that $\s_{|{\Waab_{h}}}=(\s'\tilde\s)_{|{\Waab_{h}}}$. 
\end{proof}

\section{Symmetries of the Hasse graph of $\Ab$}
Recall that  $H_\Ab$  is the Hasse diagram of $\Ab$. We identify $\Ab$ with either $\Wab$ or the set of alcoves $C_\i,\,\i\in\Ab$ (cf \eqref{alc}).
\begin{lemma}\label{L} If $f\in Aut(H_\Ab)$ is such that $f(e)=e$, then $f\in Aut(\Ab)$. \end{lemma}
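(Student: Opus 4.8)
If $f \in Aut(H_\Ab)$ is a graph automorphism of the Hasse diagram fixing the minimal element $e$, then $f$ is in fact a poset automorphism of $\Ab$.

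Let me think about what this requires.

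$H_\Ab$ is the Hasse diagram of the poset $\Ab$, viewed as an abstract (undirected) graph. A graph automorphism $f$ just permutes vertices preserving edges (covering relations), without any a priori requirement that it preserve the partial order. The poset $\Ab$ has a minimum element $e$ (corresponding to the zero ideal), and the element just above it is $s_0$ (the 1-dimensional ideal spanned by a highest root vector).

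The key difficulty: a graph automorphism of a Hasse diagram need not be a poset automorphism, because it could "flip" the order (reverse up and down) locally or globally. The poset of abelian ideals is a graded poset (graded by dimension = length $\ell(w)$). A graph automorphism of the Hasse diagram preserves adjacency but could in principle move a vertex to one of different rank, or reverse the direction of edges.

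So the natural approach: Show that $f$ preserves rank (the grading by $\ell$), which forces $f$ to preserve the direction of covering relations, hence to be order-preserving.

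**How to show $f$ preserves rank.** Since $f(e) = e$ and $e$ is the unique minimum (rank 0), and $e$ is the unique element covered by exactly... wait, $e$ has exactly one neighbor $s_0$ (from the chain in \eqref{f1}). So $f$ fixes $e$ and must send the unique neighbor $s_0$ to a neighbor of $e$. But $e$'s only neighbor is $s_0$, so $f(s_0) = s_0$. Good—so $f$ fixes the rank-1 element.

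Now induct on rank. The idea: graph distance from $e$ in $H_\Ab$ should equal rank. If I can show that the graph distance $d(e, w)$ equals $\ell(w)$ for every $w \in \Wab$, then since $f$ fixes $e$ and preserves graph distance, $f$ preserves rank. Then edges go between consecutive ranks, and $f$ preserving rank means each edge $u \to us_i$ (with $\ell(us_i) = \ell(u)+1$) maps to an edge between ranks $k$ and $k+1$ in the correct direction, i.e., $f$ is order-preserving. Combined with $f^{-1}$ also being such an automorphism, $f$ is a poset isomorphism.

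**The main obstacle.** The crux is establishing that graph distance from $e$ equals $\ell(w)$, equivalently that $H_\Ab$ is a graded graph with $e$ at the bottom and no "shortcuts." In a graded poset, every maximal chain from $\hat 0$ to $w$ has length $\ell(w)$, so the graph distance is at most $\ell(w)$; I need the reverse inequality—that you cannot get from $e$ to $w$ faster by going up and down. This is where I expect to spend the most effort. One clean way: show $H_\Ab$ is \emph{bipartite} with respect to the parity of $\ell$, since every edge joins elements whose lengths differ by exactly $1$; then any path from $e$ to $w$ has length $\equiv \ell(w) \pmod 2$ and is at least $\ell(w)$ because each step changes $\ell$ by $\pm 1$ and you must net a change of $\ell(w)$. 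That immediately gives $d(e,w) \geq \ell(w)$, and the parity/monotonicity argument then forces $f$ to map rank $k$ to rank $k$.

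**Plan in order.** First, I would record that every edge of $H_\Ab$ joins two elements whose $\Wab$-lengths differ by exactly one (covering relation in a graded poset), so $H_\Ab$ is bipartite by length-parity. Second, deduce $d(e,w) = \ell(w)$: the lower bound from the parity/step-size argument, the upper bound from any saturated chain. Third, using $f(e) = e$ and distance-preservation, conclude $\ell(f(w)) = \ell(w)$ for all $w$. Fourth, check that $f$ therefore preserves the order: if $u \lessdot v$ is a covering relation ($\ell(v) = \ell(u)+1$), then $f(u), f(v)$ are adjacent with $\ell(f(v)) = \ell(f(u)) + 1$, so $f(u) \lessdot f(v)$; since covering relations generate the order and $f$ is a bijection with $f^{-1}$ enjoying the same properties, $f$ and $f^{-1}$ both send coverings to coverings, making $f$ an isomorphism of posets, i.e. $f \in Aut(\Ab)$. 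The only subtlety to verify carefully is that the grading is genuine—that $\ell$ restricted to $\Wab$ makes $\Ab$ a graded poset with covers changing $\ell$ by exactly $1$—which follows from Remark \ref{posetiso}, since $u \leq v$ in weak order corresponds to $N(u) \subseteq N(v)$ and dimension equals $\ell$.
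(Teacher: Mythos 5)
Your proposal is correct, and its overall architecture matches the paper's: both proofs reduce the statement to showing that $f$ preserves the grading $\ell$ on $\Wab$, and then conclude order-preservation from the fact that edges of $H_\Ab$ join elements whose lengths differ by exactly one. Where you differ is in the mechanism for proving $\ell(f(w))=\ell(w)$. The paper argues by induction on $\ell(w)$: a length-$k$ element $w$ is adjacent to some $v$ with $\ell(v)=k-1$, so $\ell(f(w))\in\{k-2,k\}$, and the value $k-2$ is excluded because $f$ is injective and, by the inductive hypothesis, already permutes the set of elements of length $k-2$. You instead observe that $H_\Ab$ is bipartite by length parity, deduce that the graph distance from $e$ satisfies $d(e,w)=\ell(w)$ (lower bound from the $\pm1$ step size, upper bound from a saturated chain), and use that a graph automorphism fixing $e$ preserves distance from $e$. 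Both arguments are elementary and rest on the same input, namely that $\Ab$ is graded with covers raising dimension by exactly one (which you correctly flag as the point to verify; the paper uses it just as implicitly). Your route dispenses with the induction and with the base case via Remark \ref{51}, at the cost of introducing the distance function; the paper's route stays entirely local. There is no gap in your argument.
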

\begin{proof} It suffices to prove that for $w\in\Wab$, we have $\ell(w)=\ell(f(w))$.
In fact, if $v,w\in\Wab,v<w,$ there exists $v=v_0<v_1<\dots<v_k=w,\,v_i\in\Wab$ with $\ell(v_i)=\ell(v)+i$, hence we need to prove just that $f(v_i)<f(v_{i+1})$. Since $f(v_i)$ has to be linked in $H_\Ab$  to
$f(v_{i+1})$, the fact that $\ell(f(v_{i+1}))=\ell(f(v_i))+1$ implies $f(v_i)<f(v_{i+1})$.\par
We perform an  induction on $\ell=\ell(w)$. The claim  is true by assumption if $\ell=0$ and follows from Remark \ref{51}  if $\ell=1$.
Now, if $w\in\Wab, \ell(w)=k, k>1$, then  $w$ is linked to $v\in\Wab$,  with  $\ell(v)=k-1$. Then $\ell(f(v))=k-1$, hence  either  $\ell(f(w))=k-2$ or $\ell(f(w))=k$; the first case can't occur, since 
by induction $f$ permutes the elements of length $k-2$, hence $\ell(f(w))=k$, as required.
\end{proof}

\begin{lemma}\label{edges}
The number of edges connected to a node $w$ in $H_\Ab$ is equal to the number of roots in $\a \in\Pia$ such that $w(\a)\in \pm(\d-\Dp)$.
\end{lemma}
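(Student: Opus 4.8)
The plan is to reduce the statement to a count of right multiplications by simple reflections that stay inside $\Waab$, and then to read off this count from the inversion sets. First I would use the geometric description of the edges recalled before Proposition \ref{dotaction}: two nodes are joined by an edge of $H_\Ab$ exactly when the corresponding alcoves share a wall. The walls of $w(C_1)$ are the $w(H_i)$, where $H_i$ is the wall of $C_1$ fixed by $s_i$ (with $\a_i\in\Pia$), and the alcove adjacent to $w(C_1)$ across $w(H_i)$ is $ws_i(C_1)$. Hence the edges issuing from $w$ correspond bijectively to those $i\in\{0,\dots,n\}$ for which $ws_i(C_1)\subseteq 2C_1$, i.e. for which $ws_i\in\Waab$; distinct indices give distinct neighbours since $ws_i=ws_j$ forces $i=j$. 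Thus the number of edges at $w$ equals $\#\{\a_i\in\Pia\mid ws_i\in\Waab\}$, and it remains to prove that $ws_i\in\Waab$ if and only if $w(\a_i)\in\pm(\d-\Dp)$.

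The key tool is the characterization: for $w\in\Wa$ one has $w\in\Waab$ if and only if $N(w)\subseteq\d-\Dp$. The direction ``$\Rightarrow$'' is immediate from the definition of $\Waab$. For ``$\Leftarrow$'', write $N(w)=\d-\Phi$ with $\Phi=\{\a\in\Dp\mid \d-\a\in N(w)\}$ and use that $N(w)$ is biconvex (closed under root addition, with closed complement). If $\a,\be\in\Phi$ and $\a+\be\in\D$, then closedness would put $(\d-\a)+(\d-\be)=2\d-(\a+\be)$ in $N(w)$, contradicting $N(w)\subseteq\d-\Dp$; hence $\a+\be\notin\D$ and $\Phi$ is abelian. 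If $\a\in\Phi$ and $\a+\a_j\in\Dp$ for a simple $\a_j$, then from $\d-\a=(\d-(\a+\a_j))+\a_j$, the facts that $\a_j\notin N(w)$ (it lies at level $0$) and that the complement of $N(w)$ is closed force $\d-(\a+\a_j)\in N(w)$, i.e. $\a+\a_j\in\Phi$; so $\Phi$ is a dual order ideal. This proves $w\in\Waab$.

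Finally I would run the case analysis on the sign of $w(\a_i)$, using the standard update rules for inversion sets. If $w(\a_i)\in\Dap$ then $\ell(ws_i)=\ell(w)+1$ and $N(ws_i)=N(w)\cup\{w(\a_i)\}$; by the characterization and $N(w)\subseteq\d-\Dp$ we get $ws_i\in\Waab\iff w(\a_i)\in\d-\Dp$, and since $w(\a_i)$ is positive this is the same as $w(\a_i)\in\pm(\d-\Dp)$ (the set $-(\d-\Dp)$ consists of negative affine roots). If instead $w(\a_i)\in-\Dap$ then $-w(\a_i)\in N(w)\subseteq\d-\Dp$, so already $w(\a_i)\in-(\d-\Dp)\subseteq\pm(\d-\Dp)$; moreover $N(ws_i)=N(w)\setminus\{-w(\a_i)\}\subseteq\d-\Dp$, so $ws_i\in\Waab$ automatically. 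In both cases $ws_i\in\Waab\iff w(\a_i)\in\pm(\d-\Dp)$, which finishes the proof.

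I expect the main obstacle to be the ``$\Leftarrow$'' part of the characterization of $\Waab$: turning biconvexity of the inversion set into the abelian and dual-order-ideal properties of $\Phi$ is where the real content lies, whereas the observation that every descent of a minuscule element automatically stays minuscule (the $w(\a_i)\in-\Dap$ case) is what makes the final count symmetric and clean.
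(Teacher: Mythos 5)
Your argument is correct, and its core is the same computation as the paper's: the paper's proof consists precisely of your final case split on the sign of $w(\a_i)$, using $N(vs_i)=N(v)\cup\{v(\a_i)\}$ to conclude that each edge at $w$ produces a simple root $\a_i\in\Pia$ with $w(\a_i)\in\pm(\d-\Dp)$. Where you go beyond the paper is in proving the converse: the paper's three-line proof only exhibits the (injective) map from edges to such roots and leaves the surjectivity — that every $\a_i$ with $w(\a_i)\in\pm(\d-\Dp)$ really yields a neighbour $ws_i\in\Waab$ — unaddressed, whereas you supply it via the characterization $w\in\Waab\iff N(w)\subseteq\d-\Dp$, whose nontrivial half you correctly extract from biconvexity of $N(w)$ (closure under addition rules out a $2\d$-level root, hence abelianity of $\Phi$; closure of the complement, together with $\a_j\notin N(w)$, gives the dual order ideal property). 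Your observation that descents of minuscule elements are automatically minuscule, and your reduction of ``edge at $w$'' to ``$ws_i\in\Waab$'' via the tiling of $2C_1$ by the alcoves $C_\i$, are both sound. In short: same method, but your write-up actually establishes the stated equality of cardinalities rather than only one inequality.
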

\begin{proof}
If $v$ and $w$ are connected by an edge then $vs_i=w$ with $\ell(w)=\ell(v)\pm1$. If $\ell(w)=\ell(v)+1$ then $v(\a_i)\in N(w)$, hence $v(\a_i)\in\d-\Dp$, so $w(\a_i)=-v(\a_i)\in-(\d-\Dp)$. If $\ell(w)=\ell(v)-1$, then $ws_i=v$ hence $w(\a_i)\in N(v)$, so $w(\a_i)\in\d-\Dp$.
\end{proof}

\begin{prop}\cite{S} ÊIf $\g$ is not of type $C_3, G_2$, then $Aut(H_\Ab)=Aut(\Pia)$. If $\g$ is  of type $C_3, G_2$,  $Aut(H_\Ab)=Aut(\Pia)\times \ganz/2\ganz$.
\end{prop}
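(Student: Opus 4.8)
The plan is to prove the two inclusions separately, reducing the nontrivial one to Theorem \ref{T} by means of Lemma \ref{L}. The inclusion $Aut(\Pia)\subseteq Aut(H_\Ab)$ is already at hand: as explained after Proposition \ref{decaut}, the identification $Aut(\Pia)\cong I(2C_1)$ makes $Aut(\Pia)$ act on the alcoves of $2C_1$ preserving shared faces, hence as automorphisms of the abstract graph $H_\Ab$. So the whole task is to bound $Aut(H_\Ab)$ from above.

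The first step is to single out the degree-$1$ vertices of $H_\Ab$. Since $\Ab\cong\Wab$ is ranked by $\ell(w_\i)=\dim\i$ and $e$ is its unique minimum, any vertex that is neither $e$ nor maximal has both an upper and a lower cover, hence degree $\ge 2$; and $e$ has degree $1$ by Remark \ref{51}. Thus the set $\mathcal D$ of degree-$1$ vertices is
$$
\mathcal D=\{e\}\cup\{\,w\in\Wab \text{ maximal and covering exactly one element}\,\}.
$$
(Equivalently, by Lemma \ref{edges}, $w\in\mathcal D$ iff there is a unique $\a\in\Pia$ with $w(\a)\in\pm(\d-\Dp)$.) Next I would locate the $Aut(\Pia)$-orbit $\mathcal O$ of $e$. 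The subgroup $LI(C_1)\cong Aut(\Pi)$ fixes the alcove $C_1=e$, and since $Z_2$ is normal in $I(2C_1)$ it follows that $LI(C_1)$ permutes the set $Z_2\cdot e$; hence $\mathcal O=Z_2\cdot e$. A nontrivial element of $Z_2$ has nonzero translation part, so it cannot fix the alcove $e$; thus $Z_2$ acts freely on $e$ and $|\mathcal O|=|Z_2|=|J|+1$. Finally, each $\sigma\cdot e$ with $\sigma\in Z_2\smallsetminus\{Id\}$, being the image of the degree-$1$ vertex $e$ under a graph automorphism and distinct from the unique minimum, is a maximal vertex covering exactly one element; therefore $\mathcal O\subseteq\mathcal D$.

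The crux, and what I expect to be the main obstacle, is the reverse inclusion $\mathcal D\subseteq\mathcal O$, i.e.\ that outside types $C_3$ and $G_2$ every maximal abelian ideal with a unique lower cover is one of the corner ideals $\sigma\cdot e$. I would prove this either uniformly from the criterion of Lemma \ref{edges}, by determining exactly when a single $\a\in\Pia$ satisfies $w(\a)\in\pm(\d-\Dp)$, or by the ``local'' and rank $\le 4$ inspections used throughout Section 5. The two exceptions are genuine: in type $G_2$ the graph $H_\Ab$ is the path on the four abelian ideals, whose two endpoints both have degree $1$, while $Aut(\Pia)$ is trivial; in type $C_3$, where $Aut(\Pia)\cong\ganz/2\ganz$, the diagram at \cite[p.~213]{S} exhibits one further degree-$1$ vertex. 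In both cases $\mathcal D=\mathcal O\sqcup\{w^\ast\}$ for a single extra vertex.

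Granting $\mathcal D=\mathcal O$ in the generic case, the conclusion follows at once. Given $f\in Aut(H_\Ab)$, the vertex $f(e)$ has degree $1$, so $f(e)\in\mathcal D=\mathcal O$; choosing $g\in Aut(\Pia)$ with $g\cdot f(e)=e$, the composite $gf$ fixes $e$, whence $gf\in Aut(\Ab)$ by Lemma \ref{L}. By Theorem \ref{T} this poset automorphism is induced by an element of $Aut(\Pi)$, acting as the subgroup $LI(C_1)\subseteq Aut(\Pia)$, so $gf\in Aut(\Pia)$ and therefore $f\in Aut(\Pia)$; combined with the reverse inclusion this gives $Aut(H_\Ab)=Aut(\Pia)$. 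For $C_3$ and $G_2$ the extra vertex $w^\ast$ furnishes a graph automorphism $\iota$ of order $2$ with $\iota(e)=w^\ast$; as $\iota$ reverses the rank it is not induced by any element of $Aut(\Pia)$, and a direct inspection of the two small diagrams shows $\langle\iota\rangle\cap Aut(\Pia)=\{Id\}$ and that $\iota$ centralizes $Aut(\Pia)$, giving $Aut(H_\Ab)=Aut(\Pia)\times\ganz/2\ganz$.
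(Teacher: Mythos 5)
Your overall architecture coincides with the paper's: both proofs reduce the upper bound on $Aut(H_\Ab)$ to Theorem \ref{T} via Lemma \ref{L}, by showing that $f(e)$ can be carried back to $e$ by an element of $Z_2\subset Aut(\Pia)$, and both obtain the lower bound from the action of $I(2C_1)$ on alcoves. However, there is a genuine gap: the step you yourself flag as ``the crux'', namely that every degree-one vertex of $H_\Ab$ lies in the orbit $Z_2\cdot e$ (outside types $C_3$ and $G_2$), is exactly the mathematical content of the paper's proof, and you do not supply it -- you only list two possible strategies. The paper's argument here is not a routine inspection: from Lemma \ref{edges} and Remark \ref{51} one gets that all but one wall of the alcove $w(C_1)$, $w=f(e)$, are walls of $2C_1$, hence $w(C_1)$ contains a vertex $2o_i=2\varpi_i/m_i$ of $2C_1$ in its closure; one then shows $m_i$ must be odd (if $m_i$ were even, some root $\a$ has $c_i(\a)=m_i/2$ and the hyperplane of $\d-\a$ through $2o_i$ cuts the interior of $2C_1$, destroying one of the required walls), which already forces $m_i=1$ in the classical types. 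For the exceptional types one further identifies the corresponding maximal ideal as $\Phi_\i=\{\a\in\Dp\mid c_i(\a)>m_i/2\}$ and checks that for $m_i>1$ the Hasse graph near this maximum is a chain strictly shorter than the chain above $e$, so it cannot be $f(e)$. Without some version of this analysis your proof does not close.

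A secondary, smaller point: your final sentence for $C_3$ and $G_2$ asserts that the extra involution $\iota$ centralizes $Aut(\Pia)$ and meets it trivially; for $G_2$ this is vacuous since $Aut(\Pia)$ is trivial there, but for $C_3$ the direct-product claim does require the inspection of the diagram at \cite[p.~213]{S} that you invoke, so you should make explicit that this is where that verification happens rather than leaving it as an assertion.
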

\begin{proof} Let  $f\in Aut(H_\Ab)$. Let $w=f(e)$.  The set of faces of $w(C_1)$ is given by the hyperplanes corresponding to the roots in $w(\widehat\Pi)$.  By Remark \ref{51} only one edge is connected to $w$. Thus, by Lemma \ref{edges},  $w(\widehat\Pi)$ contains exactly one root in $\pm(\d-\Dp)$. Let $\a_j\in\Pia$ be the simple root such that $w(\a_j)\in\pm(\d-\Dp)$. All other walls of $w(C_1)$ are walls of $2C_1$. Thus there is $\be\in \Pi\cup\{\a_0+\d\}$ such that  the hyperplanes corresponding to $(\Pi\cup\{\a_0+\d\})\backslash \{\be\}$ are walls of $w(C_1)$. It follows that there is a vertex $2o_i$ (the intersection of all the hyperplanes in $(\Pi\cup\{\a_0+\d\})\backslash \{\be\}$) that is in the closure of $w(C_1)$ .  It suffices to prove that, if $i\ne 0$, then $m_i=1$.
Indeed, if this is the case, there exists $z\in Z_2$ such that $z\,f(C_1)=C_1$, hence we may apply Lemma \ref{L} and Theorem \ref{T}.

We now prove that $m_i$ is odd. Let $c_i(\a)$ denote the coefficient of $\a_i$ in the expansion of $\a$ in terms of simple roots. If $m_i$ is even, then there is a root $\a\in\Dp$ such that $c_i(\a)=m_i/2$. Then $(\d-\a)(2o_i)=0$, so the hyperplane corresponding to $\d-\a$ passes through $2o_i$ and meets the interior of $2C_1$ (see \cite{CP}). Thus the hyperplanes corresponding to $(\Pi\cup\{\a_0+\d\})\backslash \{\be\}$ cannot be all walls of $w(C_1)$.

This argument already finishes the proof in all classical cases, for, in these cases, we have that $m_i\le 2$.


It remains to deal with the exceptional cases. We first prove that, letting $\i\in\Ab$ be the ideal corresponding to $w$, then 
$$
\Phi_\i=\{\a\in\Dp\mid c_i(\a)>\frac{m_i}{2}\}.
$$
Since we are assuming that $i\ne 0$, $\i$ is maximal in $\Ab$. Since $2o_i\in \ov{w(C_1)}$, we have $w^{-1}(2o_i)\in \ov{C_1}$. If $\a\in\i$, then $w^{-1}(\d-\a)\in-\Dap$, hence $w^{-1}(\d-\a)(w^{-1}(2o_i))=(\d-\a)(2o_i)\le 0$. It follows that $1-\frac{2c_i(\a)}{m_i}\le 0$, or equivalently $c_i(\a)\ge \frac{m_i}{2}$. Since $m_i$ is odd, we see that 
$$
\Phi_\i\subset\{\a\in\Dp\mid c_i(\a)>\frac{m_i}{2}\}.
$$
Since $\{\a\in\Dp\mid c_i(\a)>\frac{m_i}{2}\}$ is clearly an abelian dual order ideal in $\Dp$ hence, since $\i$ is maximal, equality holds.

Our argument reduces the missing cases to a few direct inspections: the graph $H_\Ab$ near $1$ is of this type:
$$
\xymatrix@R+2pt@C+1pt{ 
{\bullet}\ar@{-}[d]_{i_h}\\ 
{\bullet}\ar@{.}[d]\\ 
{\bullet}\ar@{-}[d]_{i_1}\\
{e}  
}
$$
with $h=4$ in type $F_4$, $h=3,4,6$ in type $E_6$, $E_7$, $E_8$ respectively. Thus, the graph near $\i$, has to be a chain of the same length.
Using the explicit description of $\i$ given above, it is easy to determine the structure of the subposet $\{\mathfrak{j}\in\Ab\mid \j\subset\i\}$ and verify that, if $m_i>1$, then its Hasse graph near the maximum is a chain of  length strictly less than $h$.
\end{proof}

\providecommand{\bysame}{\leavevmode\hbox to3em{\hrulefill}\thinspace}
\providecommand{\href}[2]{#2}

\vskip5pt
\footnotesize{

\noindent{\bf P.C.}: Dipartimento di  Ingegneria e Geologia,
 Universit\`a  di Chieti-Pescara, Viale Pindaro 42,  65127 Pescara, Italy; 
{\tt cellini@sci.unich.it }

\noindent{\bf P.M.F.}: Politecnico di Milano, Polo regionale di Como, 
Via Valleggio 11, 22100 Como,
Italy; {\tt pierluigi.moseneder@polimi.it}

\noindent{\bf P.P.}: Dipartimento di Matematica, Sapienza Universit\`a di Roma, P.le A. Moro 2,
00185, Roma, Italy; {\tt papi@mat.uniroma1.it}

\end{document}